\newtheorem{theorem}{Theorem}[section]
\newtheorem{lemma}{Lemma}
\newtheorem*{theorem*}{Theorem}
\title[ERW with finite-drift cookie stacks]{Recurrence/Transience criteria for excited random walks with finite-drift cookie stacks}
\author[Zachary Letterhos]{Zachary Letterhos}
\date{9 March 2021}
\address{Department of Mathematics, Purdue University, 150 N University Street, West Lafayette, Indiana 47907, USA}
\keywords{Excited random walk; Cookie random walk; Recurrence; Transience; Branching-like processes}
\subjclass[2000]{Primary 60K35; Secondary 60J85}
\begin{document}
\maketitle
\begin{abstract}
    We consider excited random walk (ERW) on $\mathbb{Z}$ in environments with identical stacks of infinitely many cookies at each site, subject to the constraint that the total drift per site $\delta = \sum (2p_j - 1)$ is finite. Building on the methods of Kozma, Orenshtein, and Shinkar \cite{KOS_Periodic_Cookies}, we show that ERW in finite-drift environments is recurrent when $|\delta|<1$ and transient when $|\delta|>1$. In the case $|\delta|=1$ we prove that ERW is recurrent under mild assumptions on the environment. In addition, we show that ERW may be transient when $|\delta|=1$, an interesting new behavior that was not present in previously studied models. 
\end{abstract}

\section{Introduction}
Excited random walk (ERW) is a type of self-interacting nearest-neighbor random walk on $\mathbb{Z}$ in which the walker's next step from a site depends on the number of times the walker has previously visited that site. Formally, we define a \textit{cookie environment} $\left(\omega_{x}(j)\right)_{x \in \mathbb{Z}, j \geq 1} \in [0,1]^{\mathbb{Z}\times \mathbb{N}}$, and the excited random walk in this cookie environment is the stochastic process $\{X_n\}_{n \geq 0}$ such that $P(X_0 = 0) = 1$ and \begin{align*}
    P(X_{n+1} = X_n + 1 | X_0, X_1, \ldots, X_n) &= \omega_{X_n}(\#\{j \in \mathbb{N}: j \leq n, X_j = X_n\})\\
    P(X_{n+1} = X_n - 1 | X_0, X_1, \ldots, X_n) &= 1 - \omega_{X_n}(\#\{j \in \mathbb{N}: j \leq n, X_j = X_n\}).
\end{align*} The cookie terminology comes from a useful intuitive interpretation of ERW. First, a stack of cookies is placed at each site in $\mathbb{Z}$, and then a random walker is released. When the walker arrives at a site $x$ for the $j$th time, they consume the $j$th cookie in the stack, and this cookie induces a drift in the next step of the walk: the walker steps right with probability $\omega_x (j)$ and left with probability $1-\omega_x (j)$. We will refer to $\omega_x (j)$ as the \textit{strength} of the $j$th cookie at site $x$. If a cookie has strength $1/2$, and so doesn't induce any drift on the next step, we will call that cookie a \textit{placebo}. 

ERW was first introduced by Benjamini and Wilson in \cite{BenjWil}. The original model had only a single cookie at each site, or equivalently had $\omega_x (j) = 1/2$ for all $x \in \mathbb{Z}$ and for all $j > 1$. This model was extended by Zerner in \cite{ZernerMERW} to allow for multiple cookies at each site, and furthermore allowed for the cookie environments to be chosen according to a probability distribution. 

Zerner gave criteria for recurrence and transience of ERW in \cite{ZernerMERW} under the assumption of ``positive cookies,'' i.e. that $\omega_x (j) \geq 1/2$ for all $x$ and $j$. In this model, the long-term behavior of the walk is determined by $\delta$, the average drift contained in the cookies at each site:
\begin{align}
    \delta = \mathbb{E}\left[ \sum_{j=1}^{\infty}(2\omega_x (j) - 1)\right] \label{delta_definition}
\end{align}

In particular, Zerner proved that when $\delta > 1$, ERW is transient to $+\infty$. Basdevant and Singh \cite{BasdSingh} established that ERW is ballistic, i.e. has positive limiting speed, when $\delta > 2$ under the assumption of finitely many positive cookies at each site, or formally that there exists $M$ such that for every site $x$, $\omega_x (j) = 1/2$ whenever $j > M$. Their use of branching processes with migration laid the groundwork for much of the future work done with the ERW model. Kosygina and Zerner \cite{KosyZerner} removed the positive cookies assumption and proved recurrence/transience criteria for ERW in cookie environments with finitely many cookies at each site, namely that ERW in such environments is transient if and only if $|\delta|>1$. They also established a law of large numbers and annealed central limit theorem in the case that $|\delta|>4$. Functional limit laws for ERW with finitely many cookies at each site have also been proven, in the case $|\delta|\in(1,2)$ by Basdevant and Singh \cite{BasdevantSingh_Rate_of_Growth}, in the case $|\delta|\in(2,4)$ by Kosygina and Mountford \cite{KosyMount}, and in the case $|\delta|\leq1$ by Kosygina and Dolgopyat \cite{DolgKosy}. In all of these results, $\delta$ plays a key role: the criteria for recurrence/transience and ballisticity are given explicitly in terms of $\delta$, and the scaling exponents for the functional limit theorems directly depend on $\delta$. 

More general ERW with infinitely many cookies at each site have also been considered, but only with special structure. Kozma, Orenshtein, and Shinkar studied ERW with infinite cookie stacks with periodic structure \cite{KOS_Periodic_Cookies}, and Kosygina and Peterson worked with infinite cookie stacks generated by a Markov chain \cite{MarkovCookies}. These models have the additional complication that $\delta$, as expressed above in \eqref{delta_definition}, may not even be well-defined. Recurrence/transience criteria, law of large numbers, and functional limit theorems have been proven for both models \cite{Markov_Cookies_BMPE_KosyMountPeterson,FLL_Recurrent_ERW_Periodic,MarkovCookies}, but the results are given in terms of parameters that are associated to ERW through \textit{branching-like processes}, a generalization of the branching processes used for environments with finitely many cookies which we will discuss in a later section. These parameters can be computed explicitly, but their formulas are somewhat complicated.

\subsection{Description of model} We will study excited random walk in environments where deterministic, identical cookie stacks are placed at each site. That is, if we let $\boldsymbol{p} = (p_1 , p_2, p_3, \ldots)$ be a vector of cookie strengths, then $\omega_x (j) = p_j$ for all $x \in \mathbb{Z}$. As a slight abuse of terminology, we will refer to the environment with cookie stack $\boldsymbol{p}$ at each site as the cookie environment $\boldsymbol{p}$. We will also assume that $\boldsymbol{p}$ is an \textit{elliptic} cookie environment, i.e. that $p_i \in (0,1)$ for all $i \in \mathbb{N}$. We do not require that the cookies be positive or finite in number, and so this model is an extension of those studied in \cite{ZernerMERW} and \cite{KosyZerner} and contains those models as special cases. Since $\delta$ plays a key role in the results associated with the positive cookies and finitely many cookies per site models, it is natural to wonder what $\delta$ can tell us if we allow for infinitely many cookies at each site. However, because the environments we will consider can have a mix of positive and negative cookies, the total drift $\delta(\boldsymbol{p})$ at each site, defined by \begin{align*}
    \delta(\boldsymbol{p}) = \lim_{k \to \infty}\sum_{j=1}^k (2p_j - 1)
\end{align*}may not exist. In this paper we will only consider cookie environments where $\delta(\boldsymbol{p})$ exists and is finite. Of course, this assumption imposes structure on the cookie stacks $\boldsymbol{p}$ we can consider (for instance, we must have that $2p_j - 1 \to 0$, or equivalently $p_j \to 1/2$). As we will see, making this assumption still allows for rich and novel behavior of ERW.

For brevity, we will often write $\delta$ instead of $\delta(\boldsymbol{p})$. We will also frequently need to refer to the total drift contained in the first $m$ cookies, which we will denote $\delta_m$: \begin{align*}
    \delta_m = \sum_{j=1}^{m}(2p_j - 1).
\end{align*}

\subsection{Main results}
Our first main result is a criteria for recurrence/transience of ERW in $\boldsymbol{p}$. If $|\delta| \neq 1$, the long-term behavior of the walk is characterized by $\delta$. However, in the critical case $|\delta| = 1$, the rate at which the tail $|\sum_{j=n}^{\infty}(2p_j - 1)|$ decays to $0$ also plays a role. 
\begin{theorem}[Recurrence/transience]\thlabel{Recurrence_Transience} Let $\{X_n\}_{n \geq 0}$ be an excited random walk in a deterministic, identically-piled elliptic cookie environment $\boldsymbol{p}$ with finite total drift $\delta = \delta(\boldsymbol{p})$. \begin{enumerate}
    \item If $\delta > 1$, then $P\left(\lim_{n \to \infty} X_n = +\infty\right) = 1$,
    \item If $\delta < -1$, then $P\left(\lim_{n \to \infty}X_n = - \infty\right) = 1$,
    \item If $|\delta| < 1$, then $P\left(X_n = 0 \ \text{infinitely often}\right) = 1$,
    \item If $|\delta| = 1$ and it also holds that \begin{align}
        \left|\sum_{j=n}^{\infty}(2p_j - 1)\right| = o\left(\frac{1}{\log n}\right)\label{cookie_decay_assumption}
    \end{align} then $P\left(X_n = 0 \ \text{infinitely often}\right) = 1$.
\end{enumerate} 
\end{theorem}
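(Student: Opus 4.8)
The plan is to reduce \thref{Recurrence_Transience} to a positive-recurrence dichotomy for the branching-like process attached to $\boldsymbol{p}$, and then to settle that dichotomy by first-order, and (for $|\delta|=1$) second-order, drift estimates. Recall the branching-like process in the identically-piled setting: it is the Markov chain $(Z_m)_{m\ge 0}$ on $\{0,1,2,\dots\}$ with $Z_0=0$ whose transition, from a state $z$, is to run independent Bernoulli trials with success probabilities $p_1,p_2,p_3,\dots$ (in this order) and set $Z_{m+1}$ equal to the number of failures occurring strictly before the $(z+1)$-st success. Interpreting the edge-crossing counts of the walk, run up to the hitting time of a high level $n$, as the first coordinates of this chain (and of the mirror chain $(\widetilde Z_m)$, defined the same way but with success probabilities $1-p_1,1-p_2,\dots$), one obtains from the framework of \cite{KOS_Periodic_Cookies} (cf.\ \cite{KosyZerner}) the dictionary: $X$ is transient to $+\infty$ iff $(Z_m)$ is positive recurrent; $X$ is transient to $-\infty$ iff $(\widetilde Z_m)$ is positive recurrent; and, by the zero--one law for ERW, $X$ is recurrent iff neither chain is positive recurrent. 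At most one of the two chains can be positive recurrent, so these alternatives are exhaustive, and the theorem becomes a statement about positive recurrence of $(Z_m)$ and $(\widetilde Z_m)$.

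The first step is the one-step drift of $(Z_m)$. When $Z_m=z$ is large, the $(z+1)$-st success falls at a (random) trial index $N=N(z)$ with $N(z)\ge z$ and $N(z)\sim 2z$, and $Z_{m+1}=N(z)-(z+1)$; since $\sum_{j=1}^{N}(2p_j-1)=\delta_N\to\delta$, a martingale/optional-stopping computation gives $E[Z_{m+1}-Z_m\mid Z_m=z]=1-\delta+o(1)$ and $\mathrm{Var}(Z_{m+1}\mid Z_m=z)=2z\,(1+o(1))$ (the cookies do not affect the leading order because $p_j\to 1/2$). Hence: if $\delta>1$ the drift is eventually bounded above by a negative constant, so a linear Foster--Lyapunov function shows $(Z_m)$ is positive recurrent and $X\to+\infty$ — this is (1); and (2) is its mirror image (then $(\widetilde Z_m)$ has drift $1+\delta<0$). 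If $|\delta|<1$ then $1-\delta>0$ and $1+\delta>0$, so both $(Z_m)$ and $(\widetilde Z_m)$ have positive drift and are therefore transient; neither is positive recurrent, so $X$ is recurrent — this is (3). (In cases (1) and (2) the companion chain has strictly positive drift, hence is transient, which pins down the direction.)

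The substance is the critical case (4); by the reflection $X\mapsto -X$ (which swaps $\delta\leftrightarrow-\delta$ and preserves \eqref{cookie_decay_assumption}) we may take $\delta=1$, so $(Z_m)$ has asymptotically zero drift and the linear Lyapunov function is inconclusive. The refined input is that the level-$z$ drift equals $a(z)=1-E[\delta_{N(z)}]=E\big[\sum_{j>N(z)}(2p_j-1)\big]$; since $N(z)\ge z$, hypothesis \eqref{cookie_decay_assumption} gives $|a(z)|=o(1/\log z)$, while the increment variance is $\sim 2z$. The natural diffusion approximation $dZ=a(Z)\,dt+\sqrt{2Z}\,dB$ has scale density $s'(z)=\exp\!\big(-\!\int^{z}a(u)/u\,du\big)$ and speed density proportional to $1/(z\,s'(z))$; with $|a(z)|=o(1/\log z)$ one gets $\int^{z}a(u)/u\,du=o(\log\log z)$, hence $s'(z)=(\log z)^{o(1)}$ and $\int^{\infty}dz/(z\,s'(z))=\infty$, the signature of null (as opposed to positive) recurrence. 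Thus $(Z_m)$ is recurrent but not positive recurrent, while $(\widetilde Z_m)$ has drift $1+\delta=2>0$ and is transient; neither chain is positive recurrent, so $X$ is recurrent. The case $\delta=-1$ is identical with the roles of $(Z_m)$ and $(\widetilde Z_m)$ exchanged. (Were the tail instead of exact order $1/\log n$ with the sign making $a(z)<0$ and the constant large enough, the speed measure would be finite, $(Z_m)$ positive recurrent, and $X$ transient — the new $|\delta|=1$ behaviour advertised in the abstract, which requires cookies of mixed sign and is thus absent from the positive-cookie and finitely-many-cookie models.)

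The main obstacle is making the heuristics of the previous paragraph rigorous at the level of the discrete chain $(Z_m)$. This requires: (i) sharp asymptotics, uniform in $z$, for the first two moments of the increment of $(Z_m)$ — in particular $a(z)=o(1/\log z)$ and $\mathrm{Var}(Z_{m+1}\mid Z_m=z)=2z(1+o(1))$, which entails controlling the randomness of $N(z)$ together with the convergence $\delta_N\to\delta$ — plus enough higher-moment control to license the estimates; and (ii) a Lyapunov argument tuned to the $\log\log$ scale: recurrence via a test function of the form $(\log z)^{\gamma}$, and failure of positive recurrence via a matching lower bound on exit/return probabilities (e.g.\ $P_z(Z\text{ reaches }L\text{ before }0)\gtrsim s'(z_0)/s'(L)$ with $s'$ as above, forcing the expected return time to $0$ to be infinite). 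Pinning down the thresholds so that \eqref{cookie_decay_assumption} is exactly the dividing line is the delicate point, and it is precisely the kind of estimate for which the machinery of \cite{KOS_Periodic_Cookies} is built.
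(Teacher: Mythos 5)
Your plan is essentially the paper's plan: reduce to a branching-like process, compute the drift and variance, recognize the $|\delta|=1$ threshold as the squared-Bessel dimension-two threshold, and settle the critical case by a scale/speed (Lyapunov) analysis with the quantitative rate $o(1/\log)$. Two cosmetic differences: you work with the backward chain (state $0$, ``failures before the $(z{+}1)$-st success'') and the positive-recurrence dictionary, whereas the paper works with the forward chain $Z_n^+$ (state $n$, ``successes before the $n$-th failure'') and the survival dictionary of \thref{FBLP_Walk_Connection}; and your one-step drift is $1-\delta$ rather than $\delta$, consistent with the swapped roles. Those choices are equivalent, and your moment asymptotics match the paper's ($\rho(n)\to\delta$, $\nu(n)\to 2$, $\theta(n)\to\delta$ in the paper's normalization; $1-\delta$ and $2z$ in yours).

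The genuine gap is the one you flag yourself at the end: everything past the drift computation is a diffusion-approximation heuristic, and you do not supply the discrete Lyapunov estimate that pins the critical threshold to $o(1/\log n)$. This is exactly what the paper delegates to Theorem~1.3 of \cite{KOS_Periodic_Cookies} (\thref{Theorem_1.3_KOS_Periodic_Cookies}), which already phrases the criterion in terms of the explicit inequalities $\theta(n) \lessgtr 1 \pm c/\log n \pm \alpha(n) n^{-1/2}$; the paper's own work then reduces to showing $|\theta(n)-\delta|=o(1/\log n)$ (\thref{Parameters_FBLP}), which matches your $a(z)=o(1/\log z)$ estimate. Rather than rederiving the scale/speed dichotomy for the discrete chain, you should invoke that theorem directly. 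Doing so would also force you to address a hypothesis you currently omit: \thref{Theorem_1.3_KOS_Periodic_Cookies} requires a Hoeffding-type concentration bound $P_n(|Z_1/n-1|>\epsilon)\le 2\exp\{-C\epsilon^2 n/(2+\epsilon)\}$, which the paper proves separately (\thref{Concentration_FBLP}) and which is also what underlies your variance asymptotic $\mathrm{Var}\sim 2z$ being usable beyond leading order. Finally, a small factual slip: for $|\delta|<1$ you say both chains ``have positive drift and are therefore transient''; with increment variance $\sim 2z$ a positive constant drift does not imply transience (for $\delta\in(0,1)$ the backward chain is in fact null recurrent). The conclusion you need — neither chain is positive recurrent — still holds, but the intermediate claim as stated is wrong.
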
 Note that \thref{Recurrence_Transience} does not describe the long term behavior of ERW in cookie environments where $|\delta|=1$, but the tail of the series that defines $\delta$ tends to $0$ more slowly than $\frac{1}{\log n}$. Because ERW is recurrent when $|\delta|=1$ under the assumption of positive or finitely many cookies at each site, it is natural to conjecture that ERW is \textit{always} recurrent when $|\delta|=1$. Although we do not prove that the condition in \eqref{cookie_decay_assumption} is necessary for recurrence, our next result shows that it is possible for ERW in $\boldsymbol{p}$ to be transient even when $|\delta|=1$.

\begin{theorem}\thlabel{Transient_example}There exist cookie environments $\boldsymbol{p}$ with $|\delta|=1$ where excited random walk is transient.
\end{theorem}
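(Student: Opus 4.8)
The plan is to route everything through the branching-like process (BLP) that, by the reduction carried out in the previous sections after \cite{KOS_Periodic_Cookies}, governs recurrence and transience of ERW in $\boldsymbol p$: the walk is transient to $+\infty$ precisely when the associated BLP $\{Z_k\}_{k\ge0}$ survives (stays strictly positive forever) with positive probability, and on survival $Z_k\to\infty$. So it suffices to exhibit one elliptic stack $\boldsymbol p$ with $\delta(\boldsymbol p)=1$ whose BLP survives. To locate the slack in the critical case, one first isolates how $\{Z_k\}$ depends on $\boldsymbol p$: its increments and offspring law are read off only through the partial drifts $\delta_m$, since when $Z_{k-1}=z$ the walk pays of order $z$ visits to the next site, consuming cookies $p_1,p_2,\dots$ in order, so the bias felt by the BLP comes from $\delta_{\Theta(z)}$. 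For $\delta\ne1$ the sign of $\delta-1$ alone decides survival versus extinction, which is \thref{Recurrence_Transience}(1)--(3); for $\delta=1$ the BLP is exactly critical and survival is decided by a finer, Lamperti/Kersting-type competition between the residual drift $\delta_m-1$ (and, crucially, its sign) and the intrinsic branching fluctuations. This is where a genuine mix of positive and negative cookies is indispensable: by Zerner's theorem for positive stacks \cite{ZernerMERW}, $\delta=1$ with all $p_j\ge1/2$ is recurrent, so any transient example must carry real cancellation, $\sum_j|2p_j-1|=\infty$ even though $\sum_j(2p_j-1)=1$.

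Next I would construct $\boldsymbol p$ so that the partial sums $\delta_m$ exceed $1$ for all large $m$ (oscillating around a slowly decaying positive bias, hence with infinite total variation), with $|\delta_m-1|$ tending to $0$ slowly enough to violate the decay hypothesis \eqref{cookie_decay_assumption} of \thref{Recurrence_Transience} --- and, more to the point, slowly enough that the corresponding positive residual drift of the BLP is not absorbed by its fluctuations. Concretely one takes cookie deviations of amplitude of order $1/\log j$, superposed with an alternating correction chosen so that the series telescopes to exactly $\delta=1$ while keeping the partial sums above $1$ and $|\delta_m-1|$ decaying at a prescribed slow rate (an inverse iterated logarithm, say); one then has to verify $p_j\in(0,1)$ and convergence of the partial sums to $1$ on the nose.

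Survival of $\{Z_k\}$ I would obtain by a Lyapunov/supermartingale argument: produce a bounded, decreasing $f\colon\mathbb Z_{\ge0}\to(0,1]$ with $f(0)=1$, $f(z)\to0$, and $\mathbb E[f(Z_k)\mid Z_{k-1}=z]\le f(z)$ for all large $z$. Then $f(Z_{k\wedge\tau_0})$ is a bounded supermartingale, so $P_z(\tau_0<\infty)\le f(z)<1$ for $z$ large, i.e.\ the BLP survives with positive probability and ERW is transient. The natural candidates for $f$ are the scale-type functions of a near-critical branching process --- negative powers of $\log z$, or an exponential of minus an integral of $(\delta_{\Theta(y)}-1)/y$ --- and the verification is a one-step second-moment estimate fed by the asymptotics above; alternatively one may try to dominate $\{Z_k\}$ from below by a branching process with migration already known to survive.

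The main obstacle is exactly this quantitative matching in the critical regime. One has to pin down the drift and the fluctuations of $\{Z_k\}$ sharply in terms of $(\delta_m)$ and simultaneously tune the rate at which $\delta_m\to1$, so that the Lyapunov inequality genuinely holds --- all while keeping $\delta$ equal to $1$ and the stack elliptic. Preventing the slowly decaying positive bias from being swallowed by the branching noise, rather than driving the process to infinity, is the delicate heart of the argument; everything else is bookkeeping around the reduction already in place.
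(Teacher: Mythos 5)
Your high-level strategy is the right one and matches the paper's: reduce to survival of the forward branching-like process, then construct a stack with $\delta=1$ but with the partial drifts $\delta_m$ staying strictly above $1$ with a decay rate slow enough to violate \eqref{cookie_decay_assumption} and give the BLP a persistent positive drift. But there are two genuine problems with the way you propose to carry this out.

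First, your guiding constraint ``any transient example must have $\sum_j|2p_j-1|=\infty$'' is false, and it steers your construction into unnecessary trouble. Zerner's theorem only forces the example to contain \emph{some} negative cookies; it does not force infinite total variation. The paper's construction has $\sum_j|2p_j-1|=3<\infty$: three cookies of strength $5/6$ (total drift $+2$) followed by negative cookies of strength $\tfrac12-(\tfrac12)^{m+1}$ placed only at the doubly-exponentially sparse positions $4^{4^m}$, with placebos in between. This sparsity is exactly what makes the proof work: since $\sum(2p_j-1)^2<\infty$, one gets $|A_n-\tfrac14|=\mathcal O(1/n)$, hence by \thref{Periodic_cookies_nu_calculation} the variance error $|\nu(n)-2|=\mathcal O(n^{-1/2}\log^4 n)$ is negligible, while $\delta_m-1=(\tfrac12)^{C_{\boldsymbol p}(m)}\asymp(\log m)^{-1/2}$ is positive and decays slowly. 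Your proposed deviations of amplitude $\sim 1/\log j$ with an alternating correction would give $\sum(2p_j-1)^2=\infty$, so $b_n=|A_n-\tfrac14|$ decays only polylogarithmically and the $b_n\log^4 b_n$ term in \thref{Periodic_cookies_nu_calculation} becomes a competitor you now have to beat; more importantly, if the correction is genuinely alternating with amplitude $1/\log j$, the partial sums $\delta_m$ oscillate around $1$ with error $\lesssim 1/\log m$ and changing sign, which is the opposite of the one-sided, $\gg 1/\log m$ excess you need. The construction as sketched does not obviously produce $\delta_m-1>0$ at the advertised inverse-iterated-log rate, and you would have to do real work to fix it.

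Second, you propose to prove survival of $Z_k$ by building a Lyapunov/supermartingale $f$ from scratch. That is, in effect, re-proving the survival half of \thref{Theorem_1.3_KOS_Periodic_Cookies}, which the paper already has in hand and is designed precisely for this situation. The paper instead verifies the quantitative hypothesis of that theorem directly: it writes $\theta(n)-1=\frac{\rho(n)}{\nu(n)}(2-\nu(n))+(\rho(n)-1)$, shows $\rho(n)-1=\mathbb E[(\tfrac12)^{C_{\boldsymbol p}(T_n)}]\ge (1-e^{-Cn})/\sqrt{\log n}$ using the $T_n$-concentration \eqref{T_n_concentration} and the sparse placement of negative cookies, and absorbs the $|\nu(n)-2|=\mathcal O(n^{-1/2}\log^4 n)$ error to conclude $\theta(n)-1\ge 2/\log n$, which is exactly the transience criterion. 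Going the Lyapunov route is not wrong in principle, but it duplicates a cited theorem and still requires the same one-step drift and variance estimates on the BLP that the paper has already developed; you gain nothing and you must redo the hard analysis. I would recommend replacing both the construction and the survival argument with the paper's: a finite burst of positive drift in front, a thin, doubly-exponentially spaced tail of weak negative cookies so that $\delta=1$ on the nose and $\delta_m-1\asymp(\log m)^{-1/2}>0$, followed by a direct verification of part II of \thref{Theorem_1.3_KOS_Periodic_Cookies}.
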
 

The existence of such cookie environments distinguishes our model from those with only positive cookies or finitely many cookies per site. This behavior was also not present in previously-studied models that allowed for infinitely many positive and negative cookies at each site, such as the periodic and Markovian models. In those models, $\delta$ may be infinite or may not exist, and instead recurrence or transience can be deduced from three parameters of the associated branching-like process: $\mu$, $\theta$, and $\Tilde{\theta}$, which we will describe in more detail in the next section. For now, we will only remark that in the ``critical cases'' in the periodic and Markovian models which correspond to $|\delta|=1$ in the positive/finite cookie stack models, ERW is recurrent. We will prove \thref{Transient_example} by constructing an explicit environment $\boldsymbol{p}$ where $|\delta|=1$ but ERW is transient. 

\section{Associated branching-like processes}
\subsection{Forward branching-like processes}
Our main tool for proving \thref{Recurrence_Transience} will be Markov processes associated to the ERW in $\boldsymbol{p}$: the forward branching-like processes $\{Z_n^{+}\}_{n \geq 0}$ and $\{Z_n^{-}\}_{n \geq 0}$ that, essentially, track right and left excursions (respectively) of ERW from $0$. In fact, we can construct the forward branching processes and the ERW $\{X_n\}_{n \geq 0}$ using the same independent collection of Bernoulli random variables. To this end, for all $x \in \mathbb{Z}$ let $\xi_i^x \sim \mathrm{Ber}(p_i)$ be a sequence of independent random variables. We can then construct ERW using these Bernoulli random variables: \begin{align*}
    X_0 = 0, \quad \quad \quad X_{n+1} = X_n + \boldsymbol{1}_{\{\xi_{j}^{X_n} = 1\}} - \boldsymbol{1}_{\{\xi_{j}^{X_n} = 0\}} \ \text{ for }n > 0,
\end{align*}where $j = \#\{m \leq n : X_m = X_n\}$ corresponds to the number of times the walker has visited the present site. Intuitively, this construction corresponds to ``tossing all the coins at each site,'' then releasing a walker who starts at $0$ and moves according to the results of those coin tosses. To construct the forward branching-like process $\{Z_n^{+}\}_{n \geq 0}$ from the $\xi_i^x$, we define \begin{align*}
    T_m^x = \inf\{k \in \mathbb{N}:\sum_{i=1}^{k}(1-\xi_i^x) = m\}, \quad \quad \mathcal{S}^x(m) = \sum_{k=1}^{T_m^x} \xi_k^x.
\end{align*}In words, $T_m^x$ denotes the number of the Bernoulli trial on which the $m$th failure occurs at site $x$. Therefore, $\mathcal{S}^x(m)$ counts the number of successes in a sequence of $\mathrm{Ber}(p_i)$ trials before $m$ failures occur at site $x$. We then define $\{Z_n^{+}\}_{n \geq 0}$ to be the Markov process on $\mathbb{N}_0$ with transition probabilities given by\begin{align*}
    P(Z_n^{+} = k \ | \ Z_{n-1}^{+} = j) = P(\mathcal{S}^{n}(j) = k).
\end{align*}If we set $Z_0^{+} = 1$,  then $\{Z_n^{+}\}_{n \geq 0}$ tracks the first excursion by ERW to the right from $0$ in the following sense: if the walkers first step is to the right (i.e. $X_1 = 1$) and $H_{0} = \inf\{n > 0: X_n = 0\}$ is the time the ERW hits $0$, then on the event $\{X_1 = 1, \ H_{0}<\infty\}$, $Z_n^{+}$ is equal to the number of right steps the walker takes at site $n$ before $H_{0}$. On the event $\{X_1 = 1, \ H_0 = \infty\}$, $Z_n^{+}$ is stochastically greater than this number. For a more detailed discussion of this correspondence, see \cite{Zero_One_Law_Directional_Transience_1D_ERW}. Note, though, that $Z_n^{+} > 0$ for all $n \in \mathbb{N}$ if and only if $H_0 = \infty$. Since $P(H_0 = \infty)>0$ if and only if $P(X_n \to \infty)>0$, it follows that $P(Z_n^{+} >0 \text{ for all }n)>0$ if and only if $P(X_n \to \infty)>0$. 

In a similar way we define $\{Z_n^{-}\}_{n \geq 0}$, the forward branching-like process which tracks left excursions of ERW from $0$, except that we use an independent collection of $\mathrm{Ber}(1-p_i)$ random variables. To be concrete, we will use the collection $\{\Tilde{\xi_i^x}\}_{i \geq 1}$, where $\Tilde{\xi_i^x} \sim \mathrm{Ber}(1-p_i)$ random variables instead. By similar considerations to those above, we see that $P(X_n \to -\infty)>0$ if and only if $P(Z_n^{-}>0 \text{ for all }n)>0$. We can combine these observations with a zero-one law for directional transience of ERW that was proven by Amir, Berger, and Orenshtein in \cite{Zero_One_Law_Directional_Transience_1D_ERW} in order to connect the long-term behaviors of the ERW and the FBLP. We state their result as it pertains to our model, but note that it holds in a much more general setting.

\begin{theorem}[Theorem 1.2 of \cite{Zero_One_Law_Directional_Transience_1D_ERW}]\thlabel{Zero_One_Law_Directional_Transience_ERW} Let $\boldsymbol{p}$ be an elliptic, identically-piled cookie environment. Then \begin{align*}
    P\left(\lim_{n \to \infty}X_n = +\infty\right), \ P\left(\lim_{n \to \infty}X_n = -\infty\right) \in \{0,1\}.
\end{align*}
\end{theorem}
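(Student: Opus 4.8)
\textit{Proof proposal.} The plan is to combine the coupling with the forward branching-like processes with the spatial homogeneity of the environment. As a first step I would record the trichotomy for a nearest-neighbour walk on $\mathbb{Z}$: almost surely exactly one of $\mathcal{R}:=\{X_n\to+\infty\}$, $\mathcal{L}:=\{X_n\to-\infty\}$, $\mathcal{O}:=\{\liminf_n X_n=-\infty,\ \limsup_n X_n=+\infty\}$ occurs; moreover, using ellipticity together with $p_j\to 1/2$ (which holds because $\delta$ is finite), the event $\mathcal{O}$ coincides almost surely with $\{X_n=0\text{ infinitely often}\}$ — a walk confined to one side of $0$ that revisits some fixed site infinitely often will, once the remaining cookies there are essentially placebos, eventually step back across $0$. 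Since $\mathcal{R},\mathcal{L},\mathcal{O}$ partition the sample space, it suffices to prove $P(\mathcal{R})\in\{0,1\}$ and the mirror statement $P(\mathcal{L})\in\{0,1\}$.

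By the correspondence recalled above, $P(\mathcal{R})>0$ if and only if $\{Z_n^{+}\}$ survives with positive probability; write $\rho_{+}$ for this survival probability and $\rho_{-}$ for the corresponding quantity attached to $\{Z_n^{-}\}$. The second step is a regeneration argument upgrading $\rho_{+}>0$ to $P(\mathcal{O})=0$: whenever the walk first reaches a new maximum $M$, all sites $\ge M$ are pristine, so the conditional probability (given the past) that the walk never again goes below $M$ equals a constant $c\ge p_1\rho_{+}>0$ independent of $M$; a conditional Borel--Cantelli argument over the successive new maxima then shows that on $\{\sup_n X_n=+\infty\}$ the walk almost surely eventually stays above some level, hence tends to $+\infty$. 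Thus $\{\sup_n X_n=+\infty\}=\mathcal{R}$ almost surely, so $P(\mathcal{O})=0$ and $P(\mathcal{R})+P(\mathcal{L})=1$ as soon as $\rho_{+}>0$ (and symmetrically when $\rho_{-}>0$).

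What remains — and what I expect to be the main obstacle — is to exclude $\rho_{+}>0$ \emph{and} $\rho_{-}>0$ simultaneously, which by the previous paragraph would give $P(\mathcal{R})\in(0,1)$. A cheap partial statement is available: coupling $\{Z_n^{+}\}$ and $\{Z_n^{-}\}$ so that at each site they are driven by complementary coins, one of the two is absorbed at its very first step for every outcome of that site's first coin, whence $\rho_{+}+\rho_{-}\le 1$. The genuine difficulty is the full exclusion $\rho_{+}\rho_{-}=0$ — heuristically, that a single stack $\boldsymbol{p}$ placed at every site cannot support both rightward and leftward transience, since $\{Z_n^{+}\}$ and $\{Z_n^{-}\}$ are governed by the complementary stacks $\boldsymbol{p}$ and $(1-p_j)_{j\ge 1}$ — which is precisely the point handled by the comparison and coupling arguments of Amir, Berger, and Orenshtein. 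As their Theorem~1.2 \cite{Zero_One_Law_Directional_Transience_1D_ERW} is proved in considerably greater generality than our deterministic, identically-piled setting, we take it as given; combined with the partition from the first step it yields $P(\mathcal{R}),P(\mathcal{L})\in\{0,1\}$.
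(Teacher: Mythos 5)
This statement is not proved in the paper at all: it is imported verbatim from Amir, Berger, and Orenshtein as a cited ingredient for \thref{FBLP_Walk_Connection}, with no argument supplied. There is therefore no internal proof to compare your attempt against, and what the paper ``does'' is simply state and cite the result.

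Taken on its own merits, your sketch has a circularity at precisely the step that carries the content. The reductions you outline are sound in spirit: the trichotomy $\mathcal{R}\cup\mathcal{L}\cup\mathcal{O}$ with $\mathcal{O}=\{X_n=0\ \text{i.o.}\}$ almost surely, the translation to survival of the forward branching-like processes, and the regeneration-at-new-maxima/conditional Borel--Cantelli argument showing $\rho_{+}>0$ forces $P(\mathcal{O})=0$. (One caveat: your trichotomy step leans on $p_j\to 1/2$, which is a consequence of finite $\delta$ and is not among the theorem's hypotheses — the cited result assumes only ellipticity — so as stated your reduction does not cover the generality of the statement; this is secondary, though.) After these reductions you are left with the core claim $\rho_{+}\rho_{-}=0$, and you dispose of it by invoking ``their Theorem 1.2,'' i.e.\ the very statement under proof. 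The complementary-coin coupling giving $\rho_{+}+\rho_{-}\le 1$ is correct but does not yield $\rho_{+}\rho_{-}=0$; that exclusion is exactly the substance of the zero-one law and is where the coupling and monotonicity machinery of \cite{Zero_One_Law_Directional_Transience_1D_ERW} actually does its work, none of which your proposal reproduces. If the aim is merely to use the result, the appropriate move is to cite it directly, as the paper does, and the surrounding scaffolding is unnecessary; if the aim is to prove it, the one step you leave open is the one that must be carried out.
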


Together with the above considerations, \thref{Zero_One_Law_Directional_Transience_ERW} implies a connection between the survival of the FBLP and the recurrence/transience of ERW. In order to state it, we will require some additional notation. Let $P_n(\cdot) := P(\cdot|Z_0^{+} = n)$, let $\Tilde{P}_n(\cdot) := P(\cdot | Z_0^{-})$, and let $\mathbb{E}_n [\cdot]$ and $\Tilde{\mathbb{E}}[\cdot]$ denote the corresponding expectation.

\begin{theorem}[Theorem 2.3 of \cite{KOS_Periodic_Cookies}]\thlabel{FBLP_Walk_Connection} Let $\boldsymbol{p}$ be an elliptic cookie environment, and let $X$ be an ERW in $\boldsymbol{p}$. Then \begin{enumerate}
    \item $P_1(Z_n^{+} > 0 \text{ for all }n) > 0 \iff P(X_n \to +\infty) = 1$
    \item $P_1(Z_n^{-} > 0 \text{ for all }n) > 0 \iff P(X_n \to -\infty) = 1$
    \item $P_1(Z_n^{+} = 0 \text{ for some }n) = \Tilde{P}_1(Z_n^{-} = 0 \text{ for some }n) = 1 \iff P(X_n = 0 \ i.o. ) = 1$
\end{enumerate}
\end{theorem}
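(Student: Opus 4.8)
The plan is to deduce all three statements from the zero--one law for directional transience (\thref{Zero_One_Law_Directional_Transience_ERW}) together with the coupling between $X$ and the forward branching-like processes that was built above from the common families $(\xi_i^x)$ and $(\tilde\xi_i^x)$. Recall that that construction already supplies the two key links $P_1(Z_n^+ > 0 \text{ for all } n) > 0 \iff P(X_n \to +\infty) > 0$ and $\tilde P_1(Z_n^- > 0 \text{ for all } n) > 0 \iff P(X_n \to -\infty) > 0$: the first follows from the pathwise identity $\{Z_n^+ > 0 \text{ for all } n\} = \{H_0 = \infty\}$ (with $Z_0^+ = 1$) on the coupled space together with $P(H_0 = \infty) > 0 \iff P(X_n \to +\infty) > 0$, and the second from the analogous facts for $Z^-$ built from $(\tilde\xi_i^x)$. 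So the content of the theorem is to upgrade these ``positivity iff positivity'' statements, using that $P(X_n \to +\infty)$ and $P(X_n \to -\infty)$ are each either $0$ or $1$.

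For (1): \thref{Zero_One_Law_Directional_Transience_ERW} gives $P(X_n \to +\infty) \in \{0,1\}$, so $P(X_n \to +\infty) > 0$ is the same as $P(X_n \to +\infty) = 1$; combined with the first key link this is exactly (1). Statement (2) is identical after interchanging the roles of $+$ and $-$ (and of $(\xi_i^x)$ and $(\tilde\xi_i^x)$).

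For (3): by (1), (2) and \thref{Zero_One_Law_Directional_Transience_ERW}, the left-hand side of (3) --- that neither $Z^+$ under $P_1$ nor $Z^-$ under $\tilde P_1$ survives with positive probability --- is equivalent to $P(X_n \to +\infty) = 0$ and $P(X_n \to -\infty) = 0$, and it remains to show this is equivalent to $P(X_n = 0 \text{ i.o.}) = 1$. The implication ($\Leftarrow$) is immediate, because $\{X_n \to +\infty\}$ and $\{X_n \to -\infty\}$ are subsets of $\{X_n = 0 \text{ only finitely often}\}$. For ($\Rightarrow$), I would show $\{X_n = 0 \text{ only finitely often}\}$ is null by writing it as the union of $\{X_n \to +\infty\}$, $\{X_n \to -\infty\}$, and $\{X_n = 0 \text{ only finitely often and } X_n \not\to \pm\infty\}$: the first two are null by hypothesis, and on the third the set $V$ of sites visited infinitely often is a nonempty interval --- if $x \in V$ then, among the infinitely many visits to $x$, infinitely many are followed by a step in a common direction, so a neighbour of $x$ also lies in $V$ --- that does not contain $0$; but then the endpoint $a$ of $V$ nearest $0$ is finite, and the walk eventually always steps away from $0$ at site $a$, which pins $\xi^a_i$ to a fixed value for all large $i$, an event of probability $\prod_{i \geq N} p_i = 0$ or $\prod_{i \geq N}(1-p_i) = 0$ according to which side $V$ lies on, since $p_j \to 1/2$ forces $\sum(1-p_j) = \sum p_j = \infty$; a union bound over the countably many candidate values of $a$ (and of $N$) then closes the argument.

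The substantive input --- the coupling of $X$ with $Z^\pm$ --- is already in place, so within the proof itself the only delicate point is the trichotomy used in ($\Rightarrow$) of (3), namely excluding that the walk is eventually confined to a finite or half-infinite interval avoiding the origin; this is where ellipticity and the divergences $\sum(1-p_j) = \sum p_j = \infty$ are used. Alternatively one may invoke the partition of the sample space into $\{X_n \to +\infty\}$, $\{X_n \to -\infty\}$, and $\{X_n = 0 \text{ i.o.}\}$ directly from \cite{Zero_One_Law_Directional_Transience_1D_ERW}.
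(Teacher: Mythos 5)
Two preliminary remarks. First, the paper does not actually prove this theorem: it is quoted verbatim as Theorem 2.3 of \cite{KOS_Periodic_Cookies}, and the only ``proof'' material the paper supplies is the informal paragraph preceding \thref{Zero_One_Law_Directional_Transience_ERW}, where it is observed that $\{Z_n^+>0 \text{ for all }n\}=\{H_0=\infty\}$ under the coupling and hence $P_1(Z_n^+>0\ \forall n)>0 \iff P(X_n\to+\infty)>0$. Your treatment of parts (1) and (2) is precisely that observation combined with \thref{Zero_One_Law_Directional_Transience_ERW}, so there you and the paper agree.

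For part (3) the real content is the trichotomy: up to a null set the sample space splits into $\{X_n\to+\infty\}$, $\{X_n\to-\infty\}$, and $\{X_n=0 \text{ i.o.}\}$. Your pathwise argument for this has one gap relative to the hypotheses actually stated. The theorem assumes only that $\boldsymbol p$ is elliptic, i.e.\ $p_i\in(0,1)$, and your final step invokes ``$p_j\to 1/2$ forces $\sum(1-p_j)=\sum p_j=\infty$.'' But $p_j\to 1/2$ is not a hypothesis of this theorem; it is a consequence of the \emph{finite-drift} assumption that appears elsewhere in the paper (and, in the source \cite{KOS_Periodic_Cookies}, of periodicity). Under bare ellipticity one only knows that at least one of $\sum p_j$, $\sum(1-p_j)$ diverges, so in general only one of $\prod_{i\ge N}p_i$, $\prod_{i\ge N}(1-p_i)$ vanishes, and the case where $V$ is a half-infinite interval $[a,\infty)$ with $\sum(1-p_j)<\infty$ slips through your union bound. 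The fix is cheap: on that event the walk returns from $a+1$ to $a$ infinitely often, so $\xi_i^{a+1}=0$ for infinitely many $i$; when $\sum(1-p_j)<\infty$ the Borel--Cantelli lemma gives this probability $0$, which closes the remaining case (and by symmetry the mirror case). That said, in every application the paper actually makes of this theorem the finite-drift hypothesis is in force, so $p_j\to 1/2$ does hold and your argument goes through unchanged; and, as you note yourself, the cleanest route is simply to invoke the trichotomy directly from \cite{Zero_One_Law_Directional_Transience_1D_ERW}, which is what a reader is presumably expected to do.
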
For instance, in order to prove that ERW is transient to $+\infty$, \thref{FBLP_Walk_Connection} tells us that we can (equivalently) show that $Z_n^{+}$ ``does not die out'' with positive probability and that $Z_n^{-}$ ``dies out'' almost surely. \\

\noindent \textbf{Remark: }From this point on, we will often only be working with the collection $\{\xi_i^x\}_{i \geq 1}$ from a single site $x$ and the corresponding time of $m$th failure $T_m^x$. Since these random variables have the same distribution for each $x \in \mathbb{Z}$, we will drop the superscript $x$ as long as no confusion will arise from doing so. \\

We are now ready to analyze the connection between excited random walk and the forward branching-like processes. The parameters of the FBLP we are interested in are \begin{align*}
    \mu(n) &= \frac{\mathbb{E}_n[Z_1^{+}]}{n}  &&\rho(n)  = \mathbb{E}_n [Z_1^{+} - n] \\
    \nu(n) &= \frac{\mathrm{Var}(Z_1^{+} | Z_0 = n)}{n}   &&\theta(n)  = \frac{2\rho(n)}{\nu(n)}.
\end{align*}Furthermore, we will define \begin{align*}
    \mu &= \lim_{n \to \infty}\frac{\mathbb{E}_n[Z_1^{+}]}{n} &&\rho = \lim_{n \to \infty}\mathbb{E}_n [Z_1^{+} - n] \\
    \nu &= \lim_{n \to \infty}\frac{\mathrm{Var}(Z_1^{+} | Z_0 = n)}{n}   &&\theta  = \lim_{n \to \infty} \frac{2\rho(n)}{\nu(n)},
\end{align*}provided the limits exist. \\

\noindent \textbf{Remark: }The corresponding parameters for $Z_n^{-}$ will be denoted by $\Tilde{\mu}$, $\Tilde{\rho}$, $\Tilde{\nu}$, and $\Tilde{\theta}$, provided the limits exist. \\

We will now provide some intuition for the parameters of the FBLP. In the case of finitely many cookies per site, it has been shown by Kosygina and Mountford in \cite{KosyMount} that $Y_t^{(n)} = Z_{\lfloor nt \rfloor}^{+}/n$ can be approximated by a certain squared Bessel process. When the limits for the parameters of the FBLP exist, it would be natural to expect the same in our model. Given that $Y_t^{(n)} = y$, the increment $Y_{t + \Delta t} - Y_{t}$ is on average $\rho \Delta t$, with variance $\nu \cdot y$, and so we expect the scaling limit $Y_t$ to satisfy the SDE \begin{align*}
    dY_t &= \rho dt + \sqrt{2\nu} dB_t.
\end{align*} Analysis of this SDE reveals that the squared Bessel process that approximates $Y_{t}^{(n)}$ has generalized dimension $1 + \theta$. The approximation of scaled branching-like processes by squared Bessel processes has become a common technique in identifying the scaling limits of ERW, and has been used to identify scaling limits in the cases of finitely many cookies per site, periodic cookie stacks, and Markovian cookie stacks. We will not prove an SDE approximation for the FBLP in this paper since our results do not require it.

This connection to squared Bessel processes allows us to identify the ``critical cases'' in periodic and Markovian cookie environments mentioned beneath \thref{Transient_example}. A phase transition in the long-term behavior of squared Bessel processes occurs at dimension $2$: a $d$-dimensional squared Bessel process hits $0$ with probability $1$ if $d < 2$, but when $d \geq 2$ the probability is $0$. In light of that fact, it is not surprising that the case $\theta = 1$ requires special care. We will later see that in our model, $\theta = \delta$ and $\Tilde{\theta} = -\delta$, and so the ``critical cases'' occur when $|\delta|=1$.  In periodic and Markovian cookie environments, ERW is recurrent when $\theta = 1$, but in proving \thref{Transient_example} we will show that ERW in finite-drift cookie environments can be transient when $|\delta|=1$.

\section{Proof of \thref{Recurrence_Transience}}
Our main tool for proving \thref{Recurrence_Transience} will be the following theorem from \cite{KOS_Periodic_Cookies} that was used for establishing a recurrence/transience criteria in the case of periodic cookie stacks: 

\begin{theorem}[Theorem 1.3 of \cite{KOS_Periodic_Cookies}]\thlabel{Theorem_1.3_KOS_Periodic_Cookies}Let $\{Z_n^{+}\}_{n \geq 0}$ be the forward branching-like process associated to ERW in $\boldsymbol{p}$. Assume that the limit $\mu = \lim_{n \to \infty} \frac{\mathbb{E}_n [Z_1]}{n} = 1$, and that there is a constant $C$ such that for all $n$ sufficiently large and for all $\epsilon > 0$, it holds that \begin{align*}
    P_n\left(\left|\frac{Z_1}{n} - 1\right| > \epsilon \right) &\leq 2 \exp\left\{-\frac{C\epsilon^2 n}{2 + \epsilon}\right\}.
\end{align*} Then \begin{enumerate}
    \item[I.] if $\theta(n) < 1 + \frac{1}{\log(n)}-\alpha(n)n^{-1/2}$ for all sufficiently large $n$, where $\alpha(n)$ is such that $\alpha(n)\nu(n) \to \infty$ as $n \to \infty$, then $P(Z_n = 0 \text{ for some }n) = 1$,
    \item[II.] if $\theta(n) > 1 + \frac{2}{\log (n)} + \alpha(n) n^{-1/2}$ for all sufficiently large $n$, where $\alpha(n)$ is such that $\alpha(n)\nu(n) \to \infty$ as $n \to \infty$, then $P(Z_n > 0 \text{ for all }n) > 0.$
\end{enumerate}\end{theorem}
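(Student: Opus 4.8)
The plan is to derive both halves of the dichotomy from the existence of a suitable Lyapunov (test) function $f$ for the Markov chain $\{Z_n^{+}\}$, using two soft facts: $0$ is absorbing, and by ellipticity of $\boldsymbol{p}$ one has $P_k(Z_1^{+}=0)=P(\mathcal{S}(k)=0)=\prod_{j=1}^{k}(1-p_j)>0$, so from any state in $\{1,\dots,M\}$ there is probability at least $\prod_{j=1}^{M}(1-p_j)>0$ of being absorbed at the next step. For part I, I would seek an increasing $f\colon\N_0\to\R$, bounded below, with $f(x)\to\infty$, such that $f(Z_n^{+})$ is a supermartingale while $Z_n^{+}$ stays above some large level $L$; then on the event that $Z_n^{+}>L$ forever, $f(Z_n^{+})$ would be a bounded-below supermartingale, hence convergent, forcing $Z_n^{+}$ to be eventually constant at a value $>L$ --- impossible since no such state is absorbing. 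So $Z_n^{+}$ a.s. returns to $\{1,\dots,L\}$ (and keeps doing so until absorbed), and the uniform absorption chance above gives $P_1(Z_n^{+}=0\text{ for some }n)=1$. For part II, I would instead seek a non-increasing, bounded, nonnegative $f$ with $f(x)\to 0$ such that $f(Z_n^{+})$ is a supermartingale while $Z_n^{+}>L$; stopping at $\sigma=\inf\{m:Z_m^{+}\le L\}$ gives $f(L)P_n(\sigma<\infty)\le f(n)$, so $P_n(\sigma<\infty)\le f(n)/f(L)<1$ for $n$ large, i.e. $Z^{+}$ stays $>L$ (in particular $>0$) forever with positive probability; a single large jump out of state $1$ (possible by ellipticity) then gives $P_1(Z_n^{+}>0\text{ for all }n)>0$.

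The substance is the choice of $f$ and the one-step inequality. Using $\Var(Z_1^{+}\mid Z_0=n)=n\nu(n)$ and $\mathbb{E}_n[Z_1^{+}-n]=\rho(n)=\tfrac12\theta(n)\nu(n)$, a Taylor expansion of $f$ about $n$ gives
\begin{align*}
\mathbb{E}_n[f(Z_1^{+})]-f(n)=\frac{n\nu(n)}{2}\left(\frac{\theta(n)}{n}f'(n)+f''(n)\right)+R_n,
\end{align*}
where $R_n$ collects the third-order Taylor term and the contribution of the event $\{|Z_1^{+}/n-1|>\epsilon\}$. The bracket vanishes to leading order for $f$ solving $\theta x^{-1}f'+f''=0$, i.e. $f'\propto x^{-\theta}$; since $\theta(n)\to1$ this singles out logarithmic comparison functions. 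Taking $f(x)=\log\log(x+a)$ for part I, a direct computation gives bracket $=\frac{1}{n^{2}\log n}\big(\theta(n)-1-\tfrac{1}{\log n}\big)$, which the hypothesis $\theta(n)<1+\tfrac1{\log n}-\alpha(n)n^{-1/2}$ makes negative; taking the decreasing $f(x)=1/\log(x+a)$ for part II gives bracket $=-\frac{1}{n^{2}(\log n)^{2}}\big(\theta(n)-1-\tfrac{2}{\log n}\big)$, which $\theta(n)>1+\tfrac{2}{\log n}+\alpha(n)n^{-1/2}$ makes negative --- this is exactly where the asymmetric constants $1$ and $2$ enter, and where the $1/\log n$ correction in the thresholds is absorbed by the second-order behavior of the test function.

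The main obstacle is controlling $R_n$ and showing it is negligible against $\tfrac{n\nu(n)}{2}\cdot(\text{bracket})$, which is precisely the role of the sub-exponential concentration hypothesis $P_n(|Z_1^{+}/n-1|>\epsilon)\le 2\exp\{-C\epsilon^{2}n/(2+\epsilon)\}$. From it one bounds the centered moments $\mathbb{E}_n|Z_1^{+}-n|^{k}$ by $O\big((n\nu(n))^{k/2}\big)$ up to universal constants, so the third-order term contributes $O(\nu(n)^{3/2}n^{-3/2})$ (times a logarithmic factor), while the far-tail contribution is super-polynomially small since $f$ grows at most double-logarithmically. The same bound also forces $\nu(n)=O(1)$, so $\alpha(n)\nu(n)\to\infty$ entails $\alpha(n)\to\infty$, and then the leading term, of order $\tfrac{\nu(n)\alpha(n)}{n^{3/2}}$ up to logs, dominates $R_n$ for all large $n$; hence the supermartingale inequality holds for states above a suitable level $L$, and the soft arguments of the first paragraph conclude. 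The remaining points --- patching $f$ near the boundary, the normalization $f(0)=1$ in the optional-stopping bound, and that the inequality need only hold eventually --- are routine.
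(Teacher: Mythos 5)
This theorem is imported verbatim from \cite{KOS_Periodic_Cookies} and is not proved in the present paper, so there is no internal proof to compare against; your reconstruction follows essentially the same route as the cited source, namely Lyapunov test functions $\log\log(x+a)$ for extinction and $1/\log(x+a)$ for survival, with the Taylor expansion in terms of $\rho(n)=\tfrac12\theta(n)\nu(n)$ and $n\nu(n)$, the concentration hypothesis controlling the third-order and far-tail remainders, and the condition $\alpha(n)\nu(n)\to\infty$ entering exactly where you place it. The second-order computations producing the asymmetric corrections $1/\log n$ and $2/\log n$, and the soft absorption/escape arguments via ellipticity and optional stopping, are correct as outlined.
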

Before we proceed further, we will verify that the forward-branching like process is concentrated in the way that \thref{Theorem_1.3_KOS_Periodic_Cookies} requires.
\begin{theorem}[Concentration bound FBLP]\thlabel{Concentration_FBLP}
    Let $Z_n^{+}$ be the forward branching-like process associated to the ERW in cookie environment $\boldsymbol{p}$. Then \begin{align}
    P_n\left(\left|\frac{Z_1^{+}}{n} - 1\right|>\epsilon\right) &\leq 2 \exp \left\{-\frac{\Cl[constant]{Concentration_FBLP_constant}\epsilon^2 n}{2 + \epsilon}\right\}
\end{align}
\end{theorem}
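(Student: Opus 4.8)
The plan is to recognize $Z_1^{+}$ under $P_n$ as exactly the random variable $\mathcal{S}(n)$, the number of successes in a sequence of independent $\mathrm{Ber}(p_i)$ trials observed before the $n$th failure. If all the cookies had the same strength $p$, this would be a negative binomial random variable, and the bound would follow from a standard Chernoff/Bernstein estimate. The key structural fact I would exploit is that the event $\{\mathcal{S}(n) \geq k\}$ is the same as the event $\{T_n \geq n+k\}$, i.e.\ that among the first $n+k$ trials there are at most $n-1$ failures; and complementarily $\{\mathcal{S}(n) \leq k\}$ is $\{T_{n} \leq n+k\}$ up to an off-by-one, i.e.\ that among the first $n+k$ trials there are at least $n$ failures. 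In other words, both tails of $Z_1^{+}/n$ can be rewritten as tails of a sum of independent (but not identically distributed) Bernoulli random variables $\sum_{i=1}^{m}(1-\xi_i)$ with $m \approx (1\pm\epsilon)n$.

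The main steps, in order. First, reduce to a two-sided bound on $\sum_{i=1}^{m}(1-\xi_i)$ for the relevant deterministic $m$: the upper tail $\{Z_1^{+} \geq (1+\epsilon)n\}$ becomes a lower-deviation event for $\sum_{i=1}^{\lceil(1+\epsilon)n\rceil + n - 1}(1-\xi_i)$ being at most $n-1$, and the lower tail $\{Z_1^{+} \leq (1-\epsilon)n\}$ becomes an upper-deviation event for $\sum_{i=1}^{\lceil(1-\epsilon)n\rceil + n}(1-\xi_i)$ being at least $n$. Second, since $p_j \to 1/2$, the means $\mathbb{E}[1-\xi_i] = 1-p_i$ converge to $1/2$, so for $n$ large the expected number of failures among the first $m$ trials is $\tfrac{m}{2}(1+o(1))$; the gap between this expectation and the threshold ($n-1$ or $n$) is of order $\epsilon n$. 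Third, apply Bernstein's inequality for sums of independent bounded random variables: $P\!\left(\left|\sum_{i=1}^m (1-\xi_i) - \sum_{i=1}^m (1-p_i)\right| \geq t\right) \leq 2\exp\!\left\{-\frac{t^2}{2(\sigma^2 + t/3)}\right\}$, where $\sigma^2 = \sum_{i=1}^m p_i(1-p_i) \leq m/4$. With $t \asymp \epsilon n$ and $\sigma^2 \asymp n$, this gives a bound of the form $2\exp\{-c\epsilon^2 n/(1+\epsilon)\}$, which after adjusting constants is of the required shape $2\exp\{-\Cr{Concentration_FBLP_constant}\epsilon^2 n/(2+\epsilon)\}$. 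I would also split into the regimes $\epsilon$ small and $\epsilon$ large (say $\epsilon > 1$), the latter being crude since then $Z_1^+/n$ deviating by more than $\epsilon$ forces an atypically long run, handled by the same Bernstein bound with room to spare.

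The main obstacle is bookkeeping rather than conceptual: carefully matching the off-by-one shifts between $\{Z_1^+ \gtrless (1\pm\epsilon)n\}$ and the failure-count events, and verifying that the $o(1)$ correction coming from $p_i \neq 1/2$ (which is genuinely nonuniform, since the rate of $p_j \to 1/2$ is not assumed) does not swallow the $\epsilon n$ gap for all large $n$ uniformly in $\epsilon$. This is where I would use that for \emph{fixed} but large $n$, the relevant partial sums $\sum_{i=1}^{m}(1-p_i)$ with $m$ between $n$ and $2n+1$ satisfy $\left|\sum (1-p_i) - \tfrac{m}{2}\right| \leq \tfrac{1}{2}\sum_{i=1}^{2n+1}|2p_i-1|$, and since $\delta$ is finite this is $O(1)$ — hence negligible compared to $\epsilon n$ once $n$ is large, for every $\epsilon$ bounded below; and for $\epsilon$ not bounded below we are in the small-$\epsilon$ regime where the $\epsilon^2 n$ in the exponent is the binding constraint anyway. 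Pinning down a single constant $\Cr{Concentration_FBLP_constant}$ that works across both regimes is then a routine optimization.
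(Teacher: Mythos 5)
Your strategy is the same as the paper's: rewrite both tails of $Z_1^{+}/n$ as tail events for a fixed-length sum of independent Bernoulli variables, control the drift of the mean away from $m/2$ using finiteness of $\delta$, and apply a standard exponential concentration inequality to the centered sum. The paper uses Hoeffding rather than Bernstein; this is a cosmetic difference (Hoeffding avoids a separate variance bound, but both yield a bound of the shape $2\exp\{-C\epsilon^2 n/(2+\epsilon)\}$ once the gap $t \asymp \epsilon n$ and the window length $m \asymp (2+\epsilon)n$ are plugged in).

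One justification in your sketch is incorrect and should be repaired. You bound $\left|\sum_{i=1}^{m}(1-p_i) - \tfrac{m}{2}\right|$ by $\tfrac{1}{2}\sum_{i=1}^{2n+1}|2p_i - 1|$ and assert that this is $O(1)$ ``since $\delta$ is finite.'' But finiteness of $\delta = \lim_{m}\delta_m$ with $\delta_m = \sum_{j\leq m}(2p_j - 1)$ is only conditional convergence; it does not give $\sum_j |2p_j - 1| < \infty$ (take $2p_j - 1 = (-1)^j/j$: the signed series converges, the absolute series diverges). Fortunately the triangle inequality step is unnecessary: one has the exact identity $\sum_{i=1}^m(1-p_i) - \tfrac{m}{2} = -\tfrac{1}{2}\delta_m$, and a convergent sequence of partial sums is bounded, so $\sup_m|\delta_m| < \infty$. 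This is precisely the bound the paper uses, and with this correction your argument closes. The remaining off-by-one bookkeeping and the split into small and large $\epsilon$ regimes you describe are routine and line up with the paper's proof.
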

\begin{proof}
First, we note that \begin{align*}
    P_n \left(Z_1^{+} > m\right) = P\left(\sum_{i=1}^{m+n}\xi_i > m\right),
\end{align*}where $\{\xi_i\}_{i \geq 1}$ is an independent sequence of Bernoulli random variables with $\xi_i \sim \mathrm{Ber}(p_i)$. By centering appropriately, we see that \begin{align*}
    P\left(\sum_{i=1}^{m+n}\xi_i > m\right) &= P\left(\sum_{i=1}^{m+n}(\xi_i - p_i) > m - \sum_{i=1}^{m+n}p_i \right) \\
    &= P\left(\sum_{i=1}^{m+n}(\xi_i - p_i) > \frac{m-n}{2} - \frac{1}{2}\sum_{i=1}^{m+n}(2p_i - 1)\right)\\
    &= P\left(\sum_{i=1}^{m+n}(\xi_i - p_i) > \frac{m-n-\delta_{m+n}}{2}\right)
\end{align*}Note that when $m>n$ and is sufficiently large, $(m-n-\delta_{n+m})/2 > 0$ because $\{\delta_k\}_{k \geq 1}$ is bounded. Applying Hoeffding's inequality to the expression above gives \begin{align}
    P_n (Z_1^{+} > m) &= P\left(\sum_{i=1}^{m+n}(\xi_i - p_i) > \frac{m-n-\delta_{m+n}}{2}\right) \nonumber\\ 
    &\leq \exp\left\{- \frac{\left((m-n) - \delta_{m+n}\right)^2}{2(m+n)}\right\} \nonumber\\
    &= \exp\left\{-\frac{(m-n)^2 - 2(m-n)\delta_{m+n} + \delta_{m+n}^2}{2(m+n)}\right\} \nonumber\\
    &\leq \exp\left\{-\frac{(m-n)^2}{2(m+n)}\right\}\exp\left\{\frac{(m-n)\delta_{m+n}}{m+n}\right\} \label{Concentration_Hoeffding_application}
\end{align}provided $m$ is sufficiently large. A corresponding bound on $P_n (Z_1^{+} < m)$ can be obtained in a similar way. We can use the bound in \eqref{Concentration_Hoeffding_application} to obtain the claimed concentration inequality: \begin{align*}
    P_n\left(\left|\frac{Z_1^{+}}{n} - 1\right|>\epsilon\right) &= P_n \left(\left|Z_n^{+} - n\right|>n\epsilon\right) \\
    &= P_n (Z_n^{+} > n(1+\epsilon)) + P_n (Z_n^{+} < n(1-\epsilon) ) \\
    &\leq P_n (Z_n^{+} > \lfloor n(1+\epsilon)\rfloor) + P_n (Z_n^{+} < \lceil n(1-\epsilon)\rceil) \\
    &\leq 2\exp\left\{-\frac{\Cl[constant]{intermediate_1} n^2 \epsilon^2}{(2+\epsilon)n}\right\}\exp\left\{\frac{\Cl[constant]{intermediate_2} \epsilon \delta_{\lfloor(2+\epsilon)n\rfloor}}{2+\epsilon}\right\} \\
    &\leq 2\exp\left\{-\frac{\Cl[constant]{Hoeffding_bound_constant} n \epsilon^2}{2+\epsilon}\right\},
\end{align*}where $\Cr{Hoeffding_bound_constant}$ depends only on the sequence $\{\delta_k\}_{k \geq 1}$.
\end{proof} Note that since, conditional on $\{Z_0^{+} = n\}$, $T_n = Z_1^{+}+ n$, \thref{Concentration_FBLP}  also gives a corresponding concentration bound on $T_n$: \begin{align}
    P\left(\left|\frac{T_n}{n} - 2\right| > \epsilon\right) \leq 2\exp\left\{-\frac{C \epsilon^2 n}{2 + \epsilon}\right\}\label{T_n_concentration}
\end{align}

Since we have verified the concentration requirement of \thref{Theorem_1.3_KOS_Periodic_Cookies}, we now only need to compute parameters of the FBLP and bound their convergence rates.

\subsection{Parameters of forward branching-like process} Recall the parameters of the forward branching-like process that we are interested in: \begin{align*}
    \mu &= \lim_{n \to \infty}\frac{\mathbb{E}_n[Z_1^{+}]}{n}  &&\rho(n)  = \mathbb{E}_n [Z_1^{+} - n] \\
    \nu(n) &= \frac{\mathrm{Var}(Z_1^{+} | Z_0^{+} = n)}{n}   &&\theta(n)  = \frac{2\rho(n)}{\nu(n)}.
\end{align*}

\noindent \textbf{Remark: }By symmetry, $\{Z_n^{-}\}_{n \geq 0}$ for ERW is the environment $\boldsymbol{p}$ follows the same distribution as $\{Z_n^{+}\}_{n \geq 0}$ in the reflected environment $\boldsymbol{1 - p} = (1-p_1 , 1-p_2 , \ldots)$. Therefore, if we prove a result for $\{Z_n^{+}\}_{n \geq 0}$, we can use symmetry to deduce a corresponding result for $\{Z_n^{-}\}_{n \geq 0}$. Our next objective is to compute the parameters associated to the FLBP and determine the rates of convergence for $\rho$, $\nu$, and $\theta$.  

\begin{theorem}[Parameters of FBLP]\thlabel{Parameters_FBLP} Let $\boldsymbol{p}$ be a cookie environment with finite total drift $\delta(\boldsymbol{p})$, and let $\{Z_n\}_{n \geq 0}$ be the forward branching-like process associated to the excited random walk in this environment. Then \begin{enumerate}
    \item We have $\mu = 1$, $\rho = \delta(\boldsymbol{p})$, $\nu = 2$, and $\theta = \delta(\boldsymbol{p})$. 
    \item We have $|\nu(n) - 2| = \mathcal{O}\left(n^{-1/2}\log^4 n + b_n \log^4 b_n\right)$, where $b_n = \frac{1}{4n}\sum_{j=1}^n (2p_j - 1)^2$. Under the assumption in \eqref{cookie_decay_assumption}, we have that $|\nu(n) - 2| = o\left(\frac{1}{\log n}\right)$.
    \item Under the assumption in \eqref{cookie_decay_assumption}, $|\rho(n) - \delta| = o\left(\frac{1}{\log n}\right)$.
    \item Under the assumption in \eqref{cookie_decay_assumption}, $|\theta(n) - \delta| = o\left(\frac{1}{\log n}\right)$. 
\end{enumerate}
\end{theorem}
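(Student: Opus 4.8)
Everything rests on the following identification. Conditionally on $\{Z_0^+=n\}$, the law of $Z_1^+$ is that of the number of successes $S_{T_n}$ before the $n$th failure in the independent sequence $\xi_i\sim\mathrm{Ber}(p_i)$, where $T_n$ is the time of that $n$th failure; writing $S_k$ and $F_k=k-S_k$ for the running counts of successes and failures, one has $T_n=S_{T_n}+n$, hence $Z_1^+=S_{T_n}=T_n-n$, and the centered quantity
\begin{align*}
  M_{T_n}:=Z_1^+-n=S_{T_n}-F_{T_n}=\sum_{i=1}^{T_n}(2\xi_i-1).
\end{align*}
Thus $\rho(n)=\mathbb{E}[M_{T_n}]$, $\nu(n)=\Var(M_{T_n})/n$, and $T_n=Z_1^++n$ obeys the concentration bound \eqref{T_n_concentration}. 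The plan is to evaluate the first two moments of $M_{T_n}$ by stopping, at time $T_n$, the martingale $M_k-\delta_k=2\sum_{i\le k}(\xi_i-p_i)$ and its quadratic companion $(M_k-\delta_k)^2-4\sum_{i\le k}p_i(1-p_i)$. Since $\{T_n\ge i\}$ (``fewer than $n$ failures among the first $i-1$ trials'') is $\mathcal F_{i-1}$-measurable, the needed identities follow from orthogonality of martingale increments — equivalently from optional stopping, which applies because the increments are bounded and $\mathbb{E}[T_n]<\infty$ (as $1-p_i\to 1/2>0$) — with no uniform-integrability subtleties. The corresponding statements for $Z_n^-$ then come for free by symmetry, since $Z_n^-$ has the law of $Z_n^+$ in the reflected environment $\boldsymbol{1-p}$, whose total drift is $-\delta$.

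\emph{The parameters $\mu$ and $\rho$.} Stopping the linear martingale gives $\rho(n)=\mathbb{E}[M_{T_n}]=\mathbb{E}[\delta_{T_n}]$. As $T_n\ge n\to\infty$, $\delta_k\to\delta$, and $\sup_k|\delta_k|<\infty$, dominated convergence yields $\rho=\delta$, whence $\mu=\lim_n\bigl(1+\rho(n)/n\bigr)=1$. For the rate in (3) I would bound $|\rho(n)-\delta|\le\mathbb{E}|\delta_{T_n}-\delta|\le\sup_{k\ge n}|\delta_k-\delta|$, and under \eqref{cookie_decay_assumption} the tail $|\delta_k-\delta|=\bigl|\sum_{j>k}(2p_j-1)\bigr|$ is $o(1/\log k)$, so the supremum over $k\ge n$ is $o(1/\log n)$.

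\emph{The parameter $\nu$.} Orthogonality gives $\mathbb{E}[(M_{T_n}-\delta_{T_n})^2]=4\sum_i p_i(1-p_i)\,P(T_n\ge i)=4\,\mathbb{E}\bigl[\sum_{i\le T_n}p_i(1-p_i)\bigr]$, and using $4p_i(1-p_i)=1-(2p_i-1)^2$ together with $\mathbb{E}[T_n]=\mathbb{E}_n[Z_1^+]+n=2n+\rho(n)$ this becomes $2n+\rho(n)-\mathbb{E}\bigl[\sum_{i\le T_n}(2p_i-1)^2\bigr]$. Expanding $\Var(M_{T_n})=\mathbb{E}[M_{T_n}^2]-\rho(n)^2$ about $\delta_{T_n}$ then yields
\begin{align*}
  \nu(n)-2=-\tfrac1n\,\mathbb{E}\Bigl[\sum_{i=1}^{T_n}(2p_i-1)^2\Bigr]+\tfrac2n\,\mathbb{E}\bigl[(\delta_{T_n}-\delta)M_{T_n}\bigr]+\mathcal O(1/n).
\end{align*}
Since $2p_j-1\to 0$, the Cesàro average $b_n=\tfrac1{4n}\sum_{j\le n}(2p_j-1)^2\to 0$; with $T_n/n\to 2$ this makes the first term $o(1)$, while Cauchy--Schwarz with $\mathbb{E}[M_{T_n}^2]=\mathcal O(n)$ (read off from the identity just derived) makes the second term $o(1/\sqrt n)$. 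Hence $\nu=2$, so $\theta=2\rho/\nu=\delta$, which finishes (1). For the rate in (2) the real work is to replace $\mathbb{E}[\sum_{i\le T_n}(2p_i-1)^2]$ by $\sum_{i\le 2n}(2p_i-1)^2=8n\,b_{2n}$: on the event $\{|T_n-2n|\le\sqrt n\,\log^2 n\}$ the discrepancy is at most a constant times $\sqrt n\,\log^2 n\cdot\sup_{i\ge n}(2p_i-1)^2$, while the complementary event has super-polynomially small probability by \eqref{T_n_concentration}; feeding this and the Cauchy--Schwarz bound for the cross term into the display, and optimizing the deviation threshold, should give $|\nu(n)-2|=\mathcal O(n^{-1/2}\log^4 n+b_n\log^4 b_n)$. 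Under \eqref{cookie_decay_assumption} one has $|2p_j-1|\le|\delta_j-\delta|+|\delta_{j-1}-\delta|=o(1/\log j)$, which forces $b_n=o(1/(\log n)^2)$, and then $x\mapsto x\log^4(1/x)$ being increasing near $0$ makes both error terms $o(1/\log n)$.

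\emph{The parameter $\theta$.} Part (4) will be immediate once (2) and (3) are in place: writing $\theta(n)-\delta=\bigl(2(\rho(n)-\delta)+\delta(2-\nu(n))\bigr)/\nu(n)$, the numerator is $o(1/\log n)$ and the denominator tends to $2\ne 0$. The main obstacle I expect is precisely the rate bound for $\nu(n)$ in (2): it hinges on controlling how the stopping time $T_n$ fluctuates about $2n$ inside the random sum $\sum_{i\le T_n}(2p_i-1)^2$, and that is exactly where \eqref{T_n_concentration} is needed and where the logarithmic factors are generated; the limit computations and the other two rate estimates are routine once the two martingale identities are established.
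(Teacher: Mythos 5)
Your computations of $\mu$ and $\rho$ are essentially the paper's own: the paper invokes a Wald-type identity ($\mathbb{E}_n[Z_1^+] = \mathbb{E}[\sum_{k\le T_n}p_k]$, Lemma A.1), which is exactly your optional stopping of the linear martingale $M_k - \delta_k$, and the dominated-convergence and rate arguments match.

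Your treatment of $\nu$, however, is a genuinely different route. The paper writes $\mathbb{E}_n[(Z_1^+-n)^2]$ as a weighted sum of tail probabilities $\sum_t (2t+1)P_n(|Z_1^+-n|>t)$, splits this into a head $H_n(a)$ and tail $T_n(a)$, controls the tail by the concentration bound, and approximates the head by the standard normal cdf via Berry--Esseen (its Claims 2.6--2.9). The $\log^4$ factors and the $b_n\log^4 b_n$ term in part (2) are artifacts of the Berry--Esseen error and the paper's choice of cutoff $a_n\in\{C\log n,\ -\frac{1}{C'}\log b_n\}$. You instead stop the quadratic-variation martingale $(M_k-\delta_k)^2 - 4\sum_{i\le k}p_i(1-p_i)$ at $T_n$, yielding the exact identity $\mathbb{E}[(M_{T_n}-\delta_{T_n})^2]=2n+\rho(n)-\mathbb{E}[\sum_{i\le T_n}(2p_i-1)^2]$, and then localize the random sum $\sum_{i\le T_n}(2p_i-1)^2$ near $2n$ via \eqref{T_n_concentration}. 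This is cleaner and avoids the central-limit machinery entirely. The algebra and the optional-stopping hypotheses (bounded increments, $\mathbb{E}[T_n]<\infty$ since $1-p_i\to 1/2$) check out, as does the reduction for part (4).

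One small caveat: your method does not literally reproduce the bound $\mathcal{O}(n^{-1/2}\log^4 n + b_n\log^4 b_n)$ stated in part (2). After localization the dominant term is $8b_{2n}$ (not $b_n\log^4 b_n$), and $b_{2n}$ is not in general $\mathcal{O}(b_n\log^4 b_n + n^{-1/2}\log^4 n)$ — one only has $b_{2n}\ge b_n/2$ for free, and the contribution from $\sum_{n<j\le 2n}(2p_j-1)^2$ has no a priori relation to $b_n$. So you should state your rate honestly as $\mathcal{O}\bigl(n^{-1/2}\log^4 n + b_{2n}\bigr)$ (or $\sup_{m\le 3n}b_m$, say) rather than assert it matches the paper's. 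This is harmless for the paper's purposes: under \eqref{cookie_decay_assumption} you correctly derive $|2p_j-1|=o(1/\log j)$, hence $b_{2n}=o(1/\log^2 n)$, and your bound still gives $|\nu(n)-2|=o(1/\log n)$, which is the only consequence of part (2) actually used downstream. But as written, the claim that the martingale computation ``should give'' the paper's exact $b_n\log^4 b_n$ term is not justified and should be weakened or proved.
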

Before proving this statement, we will show how to prove \thref{Recurrence_Transience} by combining \thref{Theorem_1.3_KOS_Periodic_Cookies} and \thref{Parameters_FBLP}. \\

\noindent \textit{Proof of \thref{Recurrence_Transience} assuming \thref{Parameters_FBLP}.} Let $\boldsymbol{p}$ be a cookie environment with finite total drift $\delta(\boldsymbol{p})$. Let $\{Z^{+}_n\}_{n \geq 0}$ and $\{Z_n^{-}\}_{n \geq 0}$ be the forward branching-like processes associated to the ERW in $\boldsymbol{p}$.

Intuitively, while the branching-like process $Z^{+}$ is positive the ERW takes an excursion to the right, and when the branching-like process dies out the ERW returns to $0$. In a similar way, $Z^{-}$ tracks left excursions of the ERW (by symmetry, a left excursion in the cookie environment $\boldsymbol{p}$ corresponds to a right excursion in the ``reflected cookie environment'' $\boldsymbol{1-p}$). We showed in \thref{Concentration_FBLP} that both the branching-like processes are concentrated in the way that we need to apply \thref{Theorem_1.3_KOS_Periodic_Cookies}, and a quick calculation shows that $\delta(\boldsymbol{1-p}) = -\delta(\boldsymbol{p})$. We consider each of the cases described in \thref{Recurrence_Transience} in turn:
\begin{itemize}
    \item Suppose that $\delta(\boldsymbol{p}) > 1$, and therefore $\delta(\boldsymbol{1-p}) < -1$. By \thref{Theorem_1.3_KOS_Periodic_Cookies}, $P_1(Z^{+}_n > 0 \text{ for all }n) > 0$ and $\Tilde{P}_1(Z^{-}_n = 0 \text{ for some }n) = 1$. Therefore the ERW in $\boldsymbol{p}$ is transient to $+\infty$ a.s. by \thref{FBLP_Walk_Connection}.
    
    \item Similarly, when $\delta(\boldsymbol{p}) < -1$, we have $\delta(\boldsymbol{1-p}) > 1$. It then follows from \thref{Theorem_1.3_KOS_Periodic_Cookies} that we have $P_1(Z^{+}_n = 0 \text{ for some }n) = 1$ and $\Tilde{P}_1(Z^{-}_n > 0 \text{ for all }n)>0.$ Therefore ERW in $\boldsymbol{p}$ is transient to $-\infty$ a.s. in this case.
    
    \item In the case that $\delta(\boldsymbol{p}) \in (-1,1)$, we also have that $\delta(\boldsymbol{1-p}) \in (-1,1)$. By \thref{Theorem_1.3_KOS_Periodic_Cookies}, we see that $P_1(Z^{+}_n = 0 \text{ for some }n) = \Tilde{P}_1(Z^{-}_n = 0 \text{ for some }n) = 1$, and so in this case the ERW in $\boldsymbol{p}$ is recurrent a.s.
    
    \item In the critical case where $\delta = \pm 1$, extra care is needed. In this case, we must rely on the fact that the assumption in (\ref{cookie_decay_assumption}) is sufficient to guarantee that part I. of \thref{Theorem_1.3_KOS_Periodic_Cookies} is satisfied. Under that assumption, we can apply the theorem to show that the ERW in $\boldsymbol{p}$ is recurrent a.s. when $\delta = \pm 1$. \hfill $\blacksquare$\\
    \end{itemize}

We will now prove \thref{Parameters_FBLP}. In order to do so, we must compute the parameters of the forward branching-like process and determine their rates of convergence.

\subsubsection{Computing $\rho$ and $\mu$}
Recall that, conditional on $\{Z_0^{+} = n\}$, we can express $Z_1^{+}$ as \begin{align*}
    Z_1^{+} = \sum_{k=1}^{T_n}\xi_k,
\end{align*}where $T_n$ is the trial on which the $n$th failure in the sequence of $\mathrm{Ber}(p_i)$ trials that determine the value of $Z_1^{+}$ occurs. To compute $\rho$ and $\mu$, we will relate $T_n$ and $Z_1^{+}$ in two different ways. First, we can use a version of Wald's identity (see \thref{Wald} in the appendix) to write \begin{align*}
    \mathbb{E}_n[Z_1^{+}] &= \mathbb{E} \left[\sum_{k=1}^{T_n} p_k\right] = \mathbb{E}\left[\frac{1}{2}\sum_{k=1}^{T_n}(2p_k -1) + \frac{1}{2}T_n\right] = \frac{1}{2}\mathbb{E}[\delta_{T_n}] + \frac{1}{2}\mathbb{E}[T_n].
\end{align*}On the other hand, since conditional on $\{Z_0^{+} = n\}$, $T_n = Z_1^{+}+n$, we also have \begin{align*}
    \mathbb{E}[T_n] &= \mathbb{E}_n [Z_1^{+}] + n.
\end{align*}Substituting this expression for $\mathbb{E}[T_n]$ back into the first expression for $\mathbb{E}_n [Z_1^{+}]$, we obtain $\mathbb{E}_n[Z_1^{+}] = n + \mathbb{E}[\delta_{T_n}]$. We can then easily compute $\rho$: \begin{align}
    \rho = \lim_{n \to \infty} \mathbb{E}_n [Z_1^{+} - n] = \lim_{n \to \infty} \mathbb{E}[\delta_{T_n}] = \delta, \label{compute_rho}
\end{align}where the final equality follows from the fact that $T_n \geq n$, that the sequence $\{\delta_k\}_{k \geq 1}$ is convergent (hence bounded), and the dominated convergence theorem. In addition, the above discussion also implies that $\mu = \lim_{n \to \infty} \mathbb{E}_n [Z_1]/n = 1$. Finally, we can obtain a bound on the rate of convergence of $\rho(n)$ to $\rho$ under the assumption in \eqref{cookie_decay_assumption}: \begin{align*}
    \left|\rho(n) - \rho\right| =  \left|\mathbb{E}\left[\delta_{T_n}\right] - \delta\right| = \left|\mathbb{E}\left[\sum_{j= T_{n}+1}^{\infty}(2p_j - 1)\right]\right| \leq \sup_{m \geq n} \left|\sum_{j=m+1}^{\infty}(2p_j - 1)\right| = o\left(\frac{1}{\log n}\right).
\end{align*}

\subsubsection{Computing $\nu$}
Our primary tool for computing $\nu$ is a modified version of Lemma $2.5$ from the periodic cookies setting in \cite{KOS_Periodic_Cookies}. Before stating it, we must define two additional parameters associated to the cookie environment $\boldsymbol{p}$.
\begin{lemma} \thlabel{nu_calculation_lemma} Let $\boldsymbol{p}$ be a deterministic, identically-piled elliptic cookie environment such that $\delta(\boldsymbol{p})$ exists and is finite. We define $\Bar{p}_n = \frac{1}{n}\sum_{j=1}^n p_j$ and $A_n = \frac{1}{n}\sum_{j=1}^n p_j (1-p_{j})$. Then \begin{enumerate}
    \item[(a)] $\Bar{p}_n \to \frac{1}{2}$ as $n \to \infty$, and there exists a constant $\Cl[constant]{nu_calculation_constant}$ such that $|\Bar{p}_n - \frac{1}{2}| \leq \frac{\Cr{nu_calculation_constant}}{n}$ for all $n \in \mathbb{N}$.
    \item[(b)] $A_n \to \frac{1}{4}$ as $n \to \infty$.
\end{enumerate}
    
\end{lemma}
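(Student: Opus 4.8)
The plan is to reduce both claims to elementary facts about the partial sums $\delta_n = \sum_{j=1}^n(2p_j-1)$, which converge to the finite limit $\delta$ by hypothesis and are therefore bounded.

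For part (a) I would observe that
\begin{align*}
\Bar{p}_n - \frac12 \;=\; \frac1n\sum_{j=1}^n\left(p_j-\frac12\right) \;=\; \frac{1}{2n}\sum_{j=1}^n(2p_j-1) \;=\; \frac{\delta_n}{2n}.
\end{align*}
Choosing $M$ with $|\delta_n|\le M$ for all $n$ (possible since $\{\delta_n\}_{n\ge1}$ converges) immediately yields $|\Bar{p}_n-\tfrac12|\le M/(2n)$, which gives the quantitative bound with $\Cr{nu_calculation_constant}=M/2$; in particular $\Bar{p}_n\to\tfrac12$.

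For part (b) the key step is the algebraic identity $p_j(1-p_j)=\tfrac14-(p_j-\tfrac12)^2=\tfrac14-\tfrac14(2p_j-1)^2$, which gives
\begin{align*}
A_n \;=\; \frac14 \;-\; \frac{1}{4n}\sum_{j=1}^n(2p_j-1)^2 \;=\; \frac14-b_n,
\end{align*}
with $b_n$ the quantity appearing in \thref{Parameters_FBLP}. Since $\delta$ exists and is finite, the summands $2p_j-1$ tend to $0$, hence $(2p_j-1)^2\to0$; as the Cesàro mean of a null sequence is null, $b_n\to0$ and therefore $A_n\to\tfrac14$.

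There is no real obstacle here; the only points worth an explicit word are that it is the \emph{convergence} of $\delta_n$ (not merely boundedness of $|\delta_n|$) that forces $2p_j-1\to0$, and the standard fact that Cesàro averages preserve limits. For later use in the $\nu(n)$ estimates it is also convenient to record that splitting the sum at a fixed index $N$ gives, for $n>N$,
\begin{align*}
b_n\;\le\;\frac{1}{4n}\sum_{j=1}^N(2p_j-1)^2+\frac14\sup_{j>N}(2p_j-1)^2,
\end{align*}
which makes the decay rate of $b_n$ (and hence of $A_n-\tfrac14$) explicit in terms of the tail of the sequence $(2p_j-1)_j$.
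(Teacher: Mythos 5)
Your proof is correct and follows essentially the same route as the paper's: part (a) writes $\bar{p}_n-\tfrac12=\delta_n/(2n)$ and invokes boundedness of the convergent sequence $(\delta_n)$, and part (b) uses the identity $p_j(1-p_j)=\tfrac14-\tfrac14(2p_j-1)^2$ together with the fact that convergence of $\delta$ forces $2p_j-1\to0$, so the Cesàro mean $b_n\to0$. The closing remark about splitting the sum at a fixed index is a harmless extra, not needed for the lemma itself.
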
It should be noted that both of the statements in \thref{nu_calculation_lemma} were part of the hypotheses of Lemma 2.5 in the periodic cookie setting \cite{KOS_Periodic_Cookies}, with part (b) requiring only that $\lim A_n$ exists and is strictly positive. For cookie environments where $\delta(\boldsymbol{p})$ exists and is finite, it turns out that $\Bar{p}_n$ always tends to $\frac{1}{2}$ with $\mathcal{O}\left(\frac{1}{n}\right)$ error and $A_n$ always tends to $\frac{1}{4}$.
\begin{lemma}[Modified from Lemma 2.5 of \cite{KOS_Periodic_Cookies}]\thlabel{Periodic_cookies_nu_calculation} Let $\boldsymbol{p}$ be a deterministic, identically-piled elliptic cookie environment such that $\delta(\boldsymbol{p})$ exists and is finite, and let $b_n = |A_n - \frac{1}{4}|$.
Then \begin{align*}
    \nu = \lim_{n \to \infty}\nu(n) = \lim_{n \to \infty}\frac{1}{n}\mathrm{Var}(Z_1^{+}| Z_0 = n) = 2.
\end{align*}In addition, we can give a bound on the rate of convergence: \begin{align*}
    |\nu(n) - 2| = \left|\frac{1}{n}\mathrm{Var}(Z_1^{+}|Z_0 = n) - 2 \right| = \mathcal{O}\left(\frac{\log^4 n}{\sqrt{n}} + b_n \log^4 b_n\right).
\end{align*}
where the constant implied by the $\mathcal{O}(\cdot)$ notation depend only on the cookie environment $\boldsymbol{p}$.
\end{lemma}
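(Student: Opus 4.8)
The plan is to follow the proof of Lemma~2.5 of \cite{KOS_Periodic_Cookies}, reducing the computation of $\nu$ to the variance of the first-failure time $T_n$ and evaluating the latter by an optional-stopping argument, while tracking how the two quantities controlled in \thref{nu_calculation_lemma} (namely $|\bar p_n - \tfrac12| = \mathcal O(1/n)$, equivalently the boundedness of $\{\delta_m\}$, and $|A_n - \tfrac14| = b_n$) propagate into the error terms. The starting point is that, conditionally on $\{Z_0^+ = n\}$, we have $T_n = Z_1^+ + n$, so $\Var(Z_1^+ \mid Z_0^+ = n) = \Var(T_n)$, and it is enough to estimate $\Var(T_n)$ for the sequence of $\mathrm{Ber}(p_i)$ trials at a single site.

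First I would introduce the natural martingales. With $q_i = 1-p_i$ and $S_j = \sum_{i=1}^j (1-\xi_i)$ the number of failures in the first $j$ trials, the process $M_j = S_j - \sum_{i=1}^j q_i$ is a martingale with increments bounded by $1$ and predictable quadratic variation $\langle M\rangle_j = \sum_{i=1}^j p_i q_i = j A_j$. Since $T_n = \inf\{j : S_j = n\}$ and \thref{Concentration_FBLP} (equivalently \eqref{T_n_concentration}) give $\mathbb E[T_n] = \mathcal O(n)$ and, after integrating the tail, $\Var(T_n) = \mathcal O(n)$, a standard optional-stopping argument (the stopped $L^2$-martingale $M_{j\wedge T_n}$ is bounded in $L^2$, hence uniformly integrable) applies to $M$ and to $M_j^2 - \langle M\rangle_j$ at $T_n$. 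From $\mathbb E[M_{T_n}] = 0$ and $S_{T_n}=n$ one re-derives $\mathbb E[T_n] = 2n + \mathbb E[\delta_{T_n}]$ (consistent with \eqref{compute_rho}), and from $\mathbb E[M_{T_n}^2] = \mathbb E[\langle M\rangle_{T_n}] = \mathbb E[T_n A_{T_n}]$ together with the identity $M_{T_n} = n - \tfrac12(T_n - \delta_{T_n}) = -\tfrac12\big[(T_n - \mathbb E T_n) - (\delta_{T_n} - \mathbb E\delta_{T_n})\big]$ one obtains
\[
\Var(T_n) = 4\,\mathbb E[T_n A_{T_n}] + 2\,\mathrm{Cov}(T_n,\delta_{T_n}) - \Var(\delta_{T_n}).
\]

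Next I would read off $\nu(n)$ and its rate of convergence. Writing $A_m = \tfrac14 - b_m$ with $b_m = \tfrac1{4m}\sum_{i=1}^m (2p_i-1)^2 \ge 0$ and using $\mathbb E[T_n] = 2n + \mathcal O(1)$, the main term is $4\,\mathbb E[T_n A_{T_n}] = \mathbb E[T_n] - 4\,\mathbb E[T_n b_{T_n}] = 2n + \mathcal O(1) - 4\,\mathbb E[T_n b_{T_n}]$, so
\[
\nu(n) - 2 = -\frac{4\,\mathbb E[T_n b_{T_n}]}{n} + \frac{2\,\mathrm{Cov}(T_n,\delta_{T_n}) - \Var(\delta_{T_n}) + \mathcal O(1)}{n}.
\]
The $\delta$-terms are harmless: $\delta_{T_n}\to\delta$ with $\{\delta_m\}$ bounded gives $\Var(\delta_{T_n}) = o(1)$, and Cauchy--Schwarz with $\Var(T_n)=\mathcal O(n)$ gives $\mathrm{Cov}(T_n,\delta_{T_n}) = o(\sqrt n)$, so their contribution is $o(n^{-1/2})$. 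For the main term I would use that $m\mapsto m b_m = \tfrac14\sum_{i\le m}(2p_i-1)^2$ is nondecreasing, combined with \eqref{T_n_concentration}: on the event $\{T_n \le 2n + R\}$, which by the Gaussian-type bound has probability $1 - \mathcal O(n^{-K})$ once $R$ is a suitable multiple of $\sqrt{n\log n}$, we have $T_n b_{T_n} \le \lceil 2n+R\rceil\, b_{\lceil 2n+R\rceil}$, while on the complement $T_n b_{T_n} \le T_n$ is negligible by the tail bound and Cauchy--Schwarz; and $\lceil 2n+R\rceil b_{\lceil 2n+R\rceil}$ exceeds $2n\,b_{2n}$ by at most $\tfrac14 R$. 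Reorganising this (and relating $b_{2n}$ to $b_n$, $\bar p_n = \tfrac12 + \mathcal O(1/n)$ via \thref{nu_calculation_lemma}) in the manner of \cite{KOS_Periodic_Cookies} yields $|\nu(n)-2| = \mathcal O\big(n^{-1/2}\log^4 n + b_n\log^4 b_n\big)$; in particular $\nu = \lim_n \nu(n) = 2$, because $b_n\to0$ by Cesàro. Finally, under \eqref{cookie_decay_assumption} one has $|2p_j-1| = o(1/\log j)$, hence $b_n = o(1/\log^2 n)$ and then, by monotonicity of $x\mapsto x\log^4(1/x)$ near $0$, $b_n\log^4 b_n = o(1/\log n)$, which recovers the refined estimate used in \thref{Parameters_FBLP}(2).

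The step I expect to be the main obstacle — and the source of the logarithmic losses in the stated bound — is the passage from $\mathbb E[T_n b_{T_n}]$, an expectation over the random, only $\sqrt n$-concentrated horizon $T_n$, to the deterministic quantity $b_{2n}$ (and then to $b_n$): one must combine, in the right order, the monotonicity of $m b_m$, the Gaussian-scale concentration of $T_n$ about $2n$ coming from \thref{Concentration_FBLP}, and the crude bound $b_m \le 1$, and it is the slack in this combination that produces the $\log^4$ factors. A subsidiary technical point is the justification of optional stopping for the squared martingale, which again rests on the moment control provided by \thref{Concentration_FBLP}.
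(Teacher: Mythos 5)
Your proof takes a genuinely different route from the paper's. The paper, following \cite{KOS_Periodic_Cookies}, rewrites $\mathbb{E}_n[(Z_1^+-n)^2]$ as $\sum_{t}(2t+1)P_n(|Z_1^+-n|>t)$, splits the tail-probability sum into a head (estimated via Berry--Esseen applied to the failure counts $F_m$ for $m$ near $2n$) and a tail (estimated via Hoeffding), and then optimizes the cutoff $a_n$; the $\log^4$ factors in the stated rate come from that optimization. You instead compute $\Var(T_n)$ directly from the second-order optional stopping identity for the failure-count martingale $M_j = S_j - \sum_{i\le j}(1-p_i)$, arriving at the exact relation $\Var(T_n) = 4\,\mathbb{E}[T_n A_{T_n}] + 2\,\mathrm{Cov}(T_n,\delta_{T_n}) - \Var(\delta_{T_n})$. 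I checked this identity; it is correct, and the optional-stopping justification is sound since $\mathbb{E}[\langle M\rangle_{T_n}]\le\tfrac14\mathbb{E}[T_n]<\infty$ by \eqref{T_n_concentration}, so $M_{j\wedge T_n}$ is $L^2$-bounded and converges to $M_{T_n}$ in $L^2$. This is considerably more direct than the Berry--Esseen route: the $\delta$-terms are killed by Cauchy--Schwarz alone, the main term has a transparent structure, and you get the sharper-looking rate $\mathcal{O}\big(b_{2n}+\sqrt{(\log n)/n}\big)$ without having to balance a cutoff.

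However, the step you flag as the main obstacle is a genuine gap, and your sketch does not close it. Your bound is in terms of $b_{2n}$ (more precisely $\sup_{|m-2n|\le C\sqrt{n\log n}} b_m$), whereas the lemma states a bound in terms of $b_n$. You assert this can be fixed by ``relating $b_{2n}$ to $b_n$ via \thref{nu_calculation_lemma}, in the manner of \cite{KOS_Periodic_Cookies},'' but \thref{nu_calculation_lemma} only gives $A_n\to\tfrac14$ and $|\bar p_n-\tfrac12|=\mathcal{O}(1/n)$ and says nothing about the ratio $b_{2n}/b_n$. That ratio is genuinely uncontrolled here: $m\mapsto mb_m = \tfrac14\sum_{i\le m}(2p_i-1)^2$ is nondecreasing but may grow by up to $n/4$ over $(n,2n]$ while being tiny at $m=n$, so one can build admissible finite-drift environments (sparse bursts of drift on $(N_k, 2N_k]$ with alternating signs and $N_{k+1}\gg N_k$) for which, along a subsequence, $b_{2n}$ exceeds $b_n\log^4 b_n + n^{-1/2}\log^4 n$ by an unbounded factor. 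In the periodic setting of \cite{KOS_Periodic_Cookies} this is invisible because $b_n=\mathcal{O}(1/n)$ and $b_{2n}\asymp b_n$; for general finite-drift $\boldsymbol{p}$ it is not. To complete the argument you should either state the conclusion in terms of $\sup_{|m-2n|\le C\sqrt{n\log n}}b_m$ (which is what your argument actually delivers, and which suffices in both places the lemma is used: under \eqref{cookie_decay_assumption} one has $b_m = \mathcal{O}(1/\log^2 m)$ uniformly, and in the transient example $b_m=\mathcal{O}(1/m)$), or supply a separate hypothesis or argument controlling $b_{2n}$ by $b_n$.
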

\noindent \textbf{Remark: }Note that $n^{-1/2}\log^4 n$ is the dominant term when $b_n \ll n^{-1/2}$, but that $b_n \log^4 b_n$ is dominant when $b_n \gg n^{-1/2}$.


\begin{proof}[Proof of \thref{nu_calculation_lemma}]
To prove part (a) of \thref{nu_calculation_lemma}, note that \begin{align*}
    \left|\Bar{p}_n - \frac{1}{2}\right| = \frac{1}{n}\left|\sum_{j=1}^n \left(p_j - \frac{1}{2}\right)\right| = \frac{|\delta_n |}{2n}.
\end{align*}Since $\{\delta_n \}_{n \geq 1}$ is bounded, there exists a constant $\Cr{nu_calculation_constant}$ such that $|\delta_n | / 2n \leq \frac{\Cr{nu_calculation_constant}}{n}$ for all $n \in \mathbb{N}$. To prove (b), observe that \begin{align*}
    \left|A_n - \frac{1}{4}\right| = \left|\frac{1}{n}\sum_{j=1}^{n}p_j(1-p_j) - \frac{1}{4}\right| = \frac{1}{n}\left|\sum_{j=1}^n \left(p_j - \frac{1}{2}\right)\left(\frac{1}{2} - p_j\right)\right| = \frac{1}{4n}\left|\sum_{j=1}^{n}(2p_j - 1)^2\right|. 
\end{align*}Since we assume the series which defines $\delta$ converges, $(2p_j - 1)^2 \to 0$ as $j \to \infty$. Then $\sum_{j=1}^n (2p_j - 1)^2 = o(n)$, and so $A_n \to \frac{1}{4}$ as $n \to \infty$. \end{proof}
\begin{proof}[Proof of \thref{Periodic_cookies_nu_calculation}]
The proof is the same in spirit as the proof of Lemma 2.5 in \cite{KOS_Periodic_Cookies}. The key difference comes in obtaining the rate of convergence for $\frac{1}{n}\mathbb{E}_n\left[(Z_1^{+}- n)^2\right]$, and we will point out adjustments that need to be made along the way. First, we can use the fact that $\mathbb{E}_n [Z_1^{+}] = n + \mathbb{E}[\delta_{T_n}]$ to see that  \begin{align*}
    \mathrm{Var}(Z_1^{+}| Z_0^{+} = n) &= \mathbb{E}\left[(Z_1^{+}- n)^2\right] - \mathbb{E}[\delta_{T_n}]^{2},
\end{align*}and so there exists a constant $C$ such that $\frac{1}{n}\left|\mathrm{Var}(Z_1^{+}| Z_0 = n) - \mathbb{E}[(Z_1^{+}- n)^2]\right| \leq \frac{C}{n}$. It will therefore suffice to work with $\mathbb{E}[(Z_1^{+}- n)^2]$. We will start by rewriting this quantity: \begin{align*}
    \mathbb{E}_n \left[(Z_1^{+}- n)^2\right] &= \sum_{t=0}^{\infty}(2t+1)P_n\left(|Z_1^{+}- n| > t\right) \\
    &= 2\sum_{t=0}^{\infty}t \cdot P_n \left(|Z_1^{+}- n| > t\right) + \sum_{t = 0}^{\infty} P_n (|Z_1^{+}- n|) > t \\
    &= 2 \sum_{t = 0}^{\infty}t \cdot P_n(|Z_1^{+}- n|>t) + \mathbb{E}_n\left[|Z_1^{+}- n|\right].
\end{align*}Due to the concentration bound in \thref{Concentration_FBLP}, $\mathbb{E}_n[|Z_1^{+}- n|] = \mathcal{O}(\sqrt{n})$, and so $\frac{1}{n}\mathbb{E}_n[|Z_1^{+}- n|] = \mathcal{O}(n^{-1/2})$. Therefore, to prove our claim we only need to bound \begin{align*}
    \left|\frac{1}{n}\sum_{t=0}^{\infty}t \cdot P_n (|Z_1^{+}- n|>t) - 1\right|.
\end{align*}In fact, we will show that for any sequence $a = a_n$ such that $a_n \stackrel{n \to \infty}{\longrightarrow}\infty$ and $a_n \leq \sqrt{n}$ (for $n$ sufficiently large) that \begin{align*}
    \left|\frac{1}{n}\sum_{t=0}^{\infty}t \cdot P_n (|Z_1^{+}- n|>t) - 1\right| = \mathcal{O}\left(a^4 n^{-1/2} + \left|A_n - \frac{1}{4}\right|a^2 + e^{-C' a}\right),
\end{align*} then we will choose an appropriate sequence $a_n$ to obtain the conclusion. To obtain the bound we need, we will rewrite the events involving $Z_1^{+}$ in terms of the number of failures that occur in the coin tosses that determine the value of $Z_1^{+}$. Let $F_n$ be the number of failures in this sequence of coin tosses after the $n$th toss: \begin{align*}
    F_n := \sum_{j=1}^n \left(1 - \xi_j\right).
\end{align*}Then we have that \begin{align*}
    \left|\frac{1}{n}\sum_{t=0}^{\infty}t \cdot P_n(|Z_1^{+}- n| > t) - 1\right| &= \left|\frac{1}{n}\sum_{t=0}^{\infty}t\left(P_n(Z_1^{+}> n+t) + P(Z_1^{+}< n-t)\right) - 1\right| \\
    &= \left|\frac{1}{n}\sum_{t=0}^{\infty}t \cdot \left(P(F_{2n+t} < n) +P(F_{2n-t-1} \geq n)\right) - 1\right|.
\end{align*}As in the periodic cookies case, we divide the sum into its ``head'' and ``tail.'' Let \begin{align*}
    H_n (a) &= \sum_{t=0}^{\lfloor a\sqrt{n}\rfloor} t \cdot \left(P(F_{2n+t} < n) + P(F_{2n-t-1} \geq n)\right)\\
    T_n (a) &= \sum_{t = \lfloor a \sqrt{n} \rfloor}^{\infty} t \cdot \left(P(F_{2n+t} < n) + P(F_{2n-t-1} \geq n)\right),
\end{align*}where $a = a_n$ can be any sequence which grows slowly with $n$ (again, we will specify an appropriate choice of $a_n$ at the end of the proof). We can handle the tail $T_n (a)$ by using the following result from the periodic cookies setting. \begin{lemma}[Claim 2.8 of \cite{KOS_Periodic_Cookies}]\thlabel{Tail_sum_bound} Let $a = a_n$ such that $\lim_{n \to \infty}a_n = \infty$. Then for all sufficiently large $n \in \mathbb{N}$, \begin{align*}
    \frac{1}{n}T_n (a) \leq \Cl[constant]{Tail_sum_constant1} e^{-\Cl[constant]{Tail_sum_constant2}a}.
\end{align*}
\end{lemma}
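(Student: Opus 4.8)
\textit{Proof proposal for \thref{Tail_sum_bound}.} The idea is to bound each of the two probabilities appearing in $T_n(a)$ by a Hoeffding-type estimate and then sum the resulting tail against a Gaussian integral. Recall that $F_m=\sum_{j=1}^m(1-\xi_j)$ is a sum of independent $[0,1]$-valued random variables with $\mathbb{E}[F_m]=\sum_{j=1}^m(1-p_j)=\tfrac12(m-\delta_m)$, and that $\{\delta_m\}_{m\ge 1}$ is bounded, say $|\delta_m|\le D$ for all $m$. Consequently, for $t\ge 2D$ the event $\{F_{2n+t}<n\}$ forces $F_{2n+t}-\mathbb{E}[F_{2n+t}]<-\tfrac12(t-\delta_{2n+t})\le -\tfrac{t}{4}$, and the event $\{F_{2n-t-1}\ge n\}$ forces $F_{2n-t-1}-\mathbb{E}[F_{2n-t-1}]\ge \tfrac12(t+1+\delta_{2n-t-1})\ge \tfrac{t}{4}$; moreover this latter event is empty once $t\ge n$, since then $F_{2n-t-1}\le 2n-t-1<n$. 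Thus on the whole range $t\ge a_n\sqrt n$, which exceeds $2D$ for $n$ large because $a_n\to\infty$, we are looking at deviations of order $t$ — precisely the situation handled by Hoeffding's inequality, i.e.\ by the same computation used in the proof of \thref{Concentration_FBLP}.

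Applying Hoeffding's inequality to $F_{2n+t}$ and $F_{2n-t-1}$, I would obtain, for all large $n$ and all $t\ge a_n\sqrt n$,
\begin{align*}
P(F_{2n+t}<n)\le \exp\!\left\{-\frac{t^2}{C(n+t)}\right\},
\qquad
P(F_{2n-t-1}\ge n)\le \mathbf{1}_{\{t<n\}}\exp\!\left\{-\frac{t^2}{Cn}\right\}
\end{align*}
for a suitable constant $C$. On $a_n\sqrt n\le t\le n$ both right-hand sides are at most $\exp\{-t^2/(2Cn)\}$, while for $t>n$ only the first term is present and there $n+t\le 2t$, so it is at most $\exp\{-t/(2C)\}$. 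Hence
\begin{align*}
\frac1n T_n(a_n)\ \le\ \frac2n\sum_{t\ge a_n\sqrt n}t\,e^{-t^2/(2Cn)}\ +\ \frac1n\sum_{t> n}t\,e^{-t/(2C)}.
\end{align*}

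For the first sum I would compare with a Gaussian integral: the function $t\mapsto t\,e^{-t^2/(2Cn)}$ is decreasing for $t\ge\sqrt{Cn}$, and since $a_n\to\infty$ we have $a_n\sqrt n\ge\sqrt{Cn}$ for $n$ large, so
\begin{align*}
\frac2n\sum_{t\ge a_n\sqrt n}t\,e^{-t^2/(2Cn)}
\ \le\ \frac2n\left(a_n\sqrt n\,e^{-a_n^2/(2C)}+\int_{a_n\sqrt n}^{\infty}s\,e^{-s^2/(2Cn)}\,ds\right)
\ =\ 2\!\left(\frac{a_n}{\sqrt n}+C\right)e^{-a_n^2/(2C)}.
\end{align*}
Since $a_n^2\ge a_n$ for $n$ large and the polynomial prefactor is absorbed by the Gaussian factor, this is at most $C'e^{-a_n/(2C')}$ for a suitable $C'$. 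The second sum is a geometric-type tail, $\sum_{t>n}t\,e^{-t/(2C)}\le C''\,n\,e^{-n/(2C)}$, and a short estimate (using that $u\mapsto u\,e^{-u/(2C)}$ is bounded on $u>0$) shows $\frac1n\sum_{t>n}t\,e^{-t/(2C)}\le C'''e^{-a_n/(2C)}$ for $n$ large, regardless of how fast $a_n$ grows. Combining the two bounds and relabelling constants yields $\tfrac1n T_n(a)\le \Cr{Tail_sum_constant1}\,e^{-\Cr{Tail_sum_constant2}\,a}$ for all sufficiently large $n$.

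The step I expect to need the most care is making the estimates uniform in $t$ across the quadratic regime $t\lesssim n$ and the linear regime $t\gtrsim n$, and checking that the bounded-drift corrections $\delta_m$ never reverse the sign of the relevant deviation. The hypothesis $a_n\to\infty$ is used exactly twice: to push $a_n\sqrt n$ past the threshold $2D$ (so the deviations are genuinely of order $t$) and past the maximum $\sqrt{Cn}$ of $t\mapsto t\,e^{-t^2/(2Cn)}$ (so the tail sum is dominated by its leading term and compares cleanly with the integral). Everything else is the routine replacement of a discrete tail sum by a Gaussian integral.
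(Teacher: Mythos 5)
Since the paper itself only cites Claim 2.8 of \cite{KOS_Periodic_Cookies} without reproducing its proof, I can only evaluate your argument on its own merits. Your strategy — Hoeffding for $F_m$ after centring, splitting the tail at $t\approx n$ into a Gaussian regime and a linear-rate regime, and then comparing the Gaussian tail with an integral — is sound and is essentially the standard route for an estimate of this shape. The centring step is handled correctly: since $\mathbb{E}[F_m]=\tfrac12(m-\delta_m)$ and $\{\delta_m\}$ is bounded, for $t\geq 2D$ the required deviations are indeed of order $t$, and the observation that $\{F_{2n-t-1}\geq n\}$ is empty for $t\geq n$ is exactly what lets you drop the second probability in the linear regime. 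The Gaussian-integral comparison for the first sum is correct (monotonicity of $t\mapsto te^{-t^2/(2Cn)}$ past $\sqrt{Cn}$, and $a_n\to\infty$ is used precisely to push $a_n\sqrt n$ past that maximum), and the absorption of the polynomial prefactor by $e^{-a_n^2/(2C)}$ is fine.

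The one genuine soft spot is your closing claim that $\tfrac1n\sum_{t>n}te^{-t/(2C)}\le C'''e^{-a_n/(2C)}$ ``regardless of how fast $a_n$ grows.'' That particular intermediate bound is \emph{not} uniform in $a_n$: the left side is of order $e^{-n/(2C)}$ independently of $a_n$, so if $a_n$ grows faster than linearly (e.g.\ $a_n=n^2$), the inequality fails for large $n$. The lemma's conclusion is still true in that regime, but not via this bound — when $a_n\sqrt n>n$ the ``Gaussian'' head is empty and the tail sum actually starts at $t\geq a_n\sqrt n$, so you should estimate $\tfrac1n\sum_{t\geq a_n\sqrt n}te^{-t/(2C)}\lesssim \tfrac{a_n}{\sqrt n}e^{-a_n\sqrt n/(2C)}$, which is easily dominated by $e^{-c\,a_n}$. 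Alternatively, if (as is the case everywhere this lemma is invoked, and as is assumed in the companion Claim 2.6) one restricts to $a_n\le\sqrt n$, your argument goes through verbatim since then $a_n\le n$ and $e^{-n/(2C)}\le e^{-a_n/(2C)}$. So either add the restriction $a_n\le\sqrt n$ to match how the lemma is actually used, or replace the ``regardless'' sentence with the correct starting index $\max(n,\lfloor a_n\sqrt n\rfloor)$ in the linear-regime sum.
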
 \noindent The proof of this result in our case is identical to the one in \cite{KOS_Periodic_Cookies}, and so we will omit it. We now turn our attention to the head of the sum $H_n (a)$. The main thrust of the argument is that $H_n (a)$ can be approximated by a sum over the standard normal cdf $\Phi$.
\begin{lemma}[Claim 2.6 of \cite{KOS_Periodic_Cookies}] \thlabel{Head_sum_gaussian_approx} Let $a > 0$ and let $n \in \mathbb{N}$ be such that $a = a_n \leq \sqrt{n}$. Then \begin{align*}
    \left|\frac{1}{n}H_n (a) - \frac{1}{n}\sum_{t = 0}^{\lfloor a \sqrt{n}\rfloor}2t \cdot \Phi\left(\frac{-t}{\sqrt{8 A_n}}\right)\right| \leq \Cl[constant]{Head_sum_gaussian_approx_constant}\left(\frac{a^{4}}{\sqrt{n}} + \left|A_n - \frac{1}{4}\right|a^2\right)
\end{align*}
\end{lemma}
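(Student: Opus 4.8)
The plan is to adapt the proof of Claim 2.6 of \cite{KOS_Periodic_Cookies}, replacing its periodicity hypotheses by \thref{nu_calculation_lemma}. The key point is that each summand of $H_n(a)$ is an integral central limit statement in disguise. Writing $F_m = \sum_{j=1}^m(1-\xi_j)$, this is a sum of independent Bernoulli variables with
\[
  \mathbb{E}[F_m] = \mu_m := \sum_{j=1}^m (1-p_j) = \tfrac{m}{2} - \tfrac12 \delta_m, \qquad \mathrm{Var}(F_m) = \sigma_m^2 := \sum_{j=1}^m p_j(1-p_j) = m A_m ,
\]
so, after centering, $P(F_{2n+t} < n)$ equals $P\big((F_{2n+t}-\mu_{2n+t})/\sigma_{2n+t} \le x_t\big)$ with the explicit standardized threshold $x_t = \big(-\tfrac{t}{2}+\tfrac12\delta_{2n+t}\big)\big/\sqrt{(2n+t)A_{2n+t}}$, and $P(F_{2n-t-1} \ge n)$ is treated the same way with a one-unit shift. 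Since $A_m \to \tfrac14$ by \thref{nu_calculation_lemma}, for all $t$ in the relevant range $0 \le t \le a\sqrt n$ the variance $\sigma_m^2 = mA_m$ is of order $n$, so these are genuine $\sqrt n$-scale events and the normal approximation is available.

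First I would invoke the Berry--Esseen theorem in Lyapunov form for independent, non-identically distributed summands: a centered Bernoulli has uniformly bounded third absolute moment and $\sigma_m^2 = mA_m$ grows linearly, so
\[
  \Big|\,P\big(\tfrac{F_m-\mu_m}{\sigma_m}\le x\big) - \Phi(x)\,\Big| \;\le\; \frac{C}{\sqrt m} \;=\; \mathcal O\big(n^{-1/2}\big)
\]
uniformly in $x$ and in $m$ with $|m-2n|\le a\sqrt n + 1$. Plugging this into $H_n(a)$ and using $\sum_{t\le a\sqrt n} t = \mathcal O(a^2 n)$, the accumulated Berry--Esseen error in $\tfrac1n H_n(a)$ is $\mathcal O(a^2 n^{-1/2})$, which is inside the $a^4 n^{-1/2}$ budget.

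The real content is to then replace the exact threshold $x_t$ by its leading term $\bar x_t := -t\big/\sqrt{8nA_n}$ — the clean argument of $\Phi$ appearing in the statement — i.e. to bound $\sum_{t\le a\sqrt n} t\,\big|\Phi(x_t)-\Phi(\bar x_t)\big|$. This is a Taylor expansion of $\Phi$ using $|\Phi(u)-\Phi(v)|\le |u-v|\sup\varphi$ together with the Gaussian decay of $\varphi = \Phi'$. There are two discrepancies between $x_t$ and $\bar x_t$. The first is the bounded drift $\tfrac12\delta_{2n+t}$ in the numerator, which perturbs the threshold by $\mathcal O(n^{-1/2})$ and hence, after the weighted sum, contributes $\mathcal O(a^2 n^{-1/2})$. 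The second is the mismatch between the normalization $(2n+t)A_{2n+t}$ and $8nA_n$: the part due to $2n+t$ versus $2n$ is a relative error $\mathcal O(t/n)$ that feeds powers of $t/\sqrt n$ into the threshold and, multiplied by the weight $t$ and summed over $t \le a\sqrt n$, produces the $a^4 n^{-1/2}$ term, while the part due to $A_{2n+t}$ versus $A_n$, after expanding the square root, contributes a factor proportional to $|A_n-\tfrac14|$ and, after the weighted sum, the $|A_n-\tfrac14|\,a^2$ term.

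The main obstacle is this last piece of bookkeeping: one must check that every local average $A_{2n\pm t}$, $0\le t\le a\sqrt n$, may be replaced by the single average $A_n$ with a uniform error of the claimed size. This uses only part (b) of \thref{nu_calculation_lemma}: writing $A_{2n+t}-\tfrac14$ as a convex combination of $A_n-\tfrac14$ and the tail average $\tfrac1{2n+t}\sum_{j=n+1}^{2n+t}\big(p_j(1-p_j)-\tfrac14\big)$, and bounding the latter by $\sup_{k>n}|p_k(1-p_k)-\tfrac14| \to 0$, keeps all the perturbations within the stated bound. Combining the Berry--Esseen error, the drift error, and the variance-normalization error and regrouping then yields $\mathcal O\big(a^4 n^{-1/2} + |A_n-\tfrac14|\,a^2\big)$, as claimed in \thref{Head_sum_gaussian_approx}; the entire computation mirrors \cite{KOS_Periodic_Cookies}, the only change being that the variance is now controlled through \thref{nu_calculation_lemma} rather than through periodicity.
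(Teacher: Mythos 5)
Your approach is essentially the paper's: standardize the $F_m$ events, apply Berry--Esseen for sums of non-identically distributed Bernoullis (which gives a uniform $O(n^{-1/2})$), then use the $\tfrac{1}{\sqrt{2\pi}}$-Lipschitz property of $\Phi$ to account for the discrepancy between the exact standardized threshold and the clean target $-t/\sqrt{8nA_n}$, weighting by $t$ and summing over $t\le a\sqrt n$. The only organizational difference is that you fold into a single Taylor expansion what the paper isolates as an intermediate step (\thref{KOS_key_difference}, which replaces $\bar q_m$ and $A_m$ by their limits $\tfrac12$ and $\tfrac14$ in the normalization) followed by a separate estimate for $2n\pm t$ versus $2n$. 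The error accounting you describe — drift $\Rightarrow O(a^2 n^{-1/2})$, $2n\pm t$ mismatch $\Rightarrow O(a^4 n^{-1/2})$, variance mismatch $\Rightarrow O(|A_n-\tfrac14|\,a^2)$ — matches the paper's.

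One point where your last paragraph is looser than its conclusion: bounding $A_{2n+t}-\tfrac14$ by a convex combination of $A_n-\tfrac14$ and $\sup_{k>n}|p_k(1-p_k)-\tfrac14|$ does not by itself give $O(|A_n-\tfrac14|\,a^2 + a^4 n^{-1/2})$, since that sup is not in general controlled by either $|A_n-\tfrac14|$ or $n^{-1/2}$; it would leave an uncontrolled $\sup_{k>n}|p_k(1-p_k)-\tfrac14|\cdot a^2$ term. However, the paper's own proof is equally loose here — it invokes \thref{KOS_key_difference} at index $2n\pm t$ but records the error as $|A_n-\tfrac14|$. A clean repair, for both: for $m$ in the window $[2n-a\sqrt n,\, 2n+a\sqrt n]\subset[n,3n]$ (using $a\le\sqrt n$), nonnegativity of $(2p_j-1)^2$ gives
\[
  \bigl|A_m-\tfrac14\bigr| \;=\; \frac{1}{4m}\sum_{j=1}^m (2p_j-1)^2 \;\le\; \frac{3n}{m}\,\bigl|A_{3n}-\tfrac14\bigr| \;\le\; 3\,\bigl|A_{3n}-\tfrac14\bigr|,
\]
so the bound really holds with $|A_{3n}-\tfrac14|$ in place of $|A_n-\tfrac14|$. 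This is harmless downstream — in \thref{Periodic_cookies_nu_calculation} and the rate for $\theta(n)$ one only needs the bound to be $o(1/\log n)$ under \eqref{cookie_decay_assumption}, and $|A_{3n}-\tfrac14|$ satisfies this exactly as $|A_n-\tfrac14|$ does. With that minor repair your argument is correct and within the paper's level of rigor.
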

\begin{lemma}[Claim 2.7 of \cite{KOS_Periodic_Cookies}]\thlabel{Head_sum_approx_error} Let $a = a_n$ such that $\lim_{n\to\infty}a_n = \infty$. Then \begin{align*}
    \lim_{n \to \infty} \frac{1}{n}\sum_{t=0}^{\lfloor a \sqrt{n}\rfloor} 2t \cdot \Phi\left(\frac{-t}{\sqrt{8 A_n}}\right) = 1,
\end{align*}where $\Phi$ is the standard normal cdf. Moreover, \begin{align*}
    \frac{1}{n}\sum_{t=0}^{\lfloor a \sqrt{n}\rfloor} 2t \cdot \Phi\left(\frac{-t}{\sqrt{8 A_n}}\right) = 1 + \mathcal{O}\left(\frac{a}{\sqrt{n}} + \exp\left(-Ca\right)\right)
\end{align*}
\end{lemma}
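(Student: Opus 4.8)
\noindent\textit{Proof plan for \thref{Head_sum_approx_error}.} The plan is to compare the displayed sum, after division by $n$, with an explicit Gaussian integral, using that the Gaussian standard deviation $\sigma_n$ appearing in the summand (the quantity written $\sqrt{8A_n}$) satisfies $\sigma_n^2 \sim 2n$ — a consequence of $p_j(1-p_j)\to\tfrac14$, cf.\ \thref{nu_calculation_lemma}. First I would remove the cutoff: since $\Phi(-u)\le e^{-u^2/2}$ and $\lfloor a\sqrt n\rfloor/\sigma_n\to\infty$, the discarded tail $\frac1n\sum_{t>\lfloor a\sqrt n\rfloor}2t\,\Phi(-t/\sigma_n)$ is bounded, after the substitution $t=\sigma_n s$, by a constant multiple of $\int_{a/\sqrt2}^\infty s\,e^{-s^2/2}\,ds = e^{-a^2/4}$, hence is $\mathcal O(e^{-Ca})$. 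Next I would pass from the unit-step sum $\frac1n\sum_{t\ge0}2t\,\Phi(-t/\sigma_n)$ to the integral $\frac1n\int_0^\infty 2x\,\Phi(-x/\sigma_n)\,dx$: the integrand is $C^1$, vanishes at $0$ and at $\infty$, and has total variation $\mathcal O(\sigma_n)=\mathcal O(\sqrt n)$, so this replacement costs $\mathcal O(\sqrt n)$, i.e.\ $\mathcal O(1/\sqrt n)$ after dividing by $n$.

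With the sum reduced to the integral, I would substitute $x=\sigma_n y$ to get $\frac1n\int_0^\infty 2x\,\Phi(-x/\sigma_n)\,dx=\frac{\sigma_n^2}{n}\int_0^\infty 2y\,\Phi(-y)\,dy$, and then evaluate the remaining integral by parts with $u=\Phi(-y)$, $dv=2y\,dy$ (so $v=y^2$, $du=-\phi(y)\,dy$): the boundary term vanishes by Gaussian decay and $\int_0^\infty 2y\,\Phi(-y)\,dy=\int_0^\infty y^2\,\phi(y)\,dy=\tfrac12$, using $\int_{\R}y^2\phi(y)\,dy=1$. Since $\sigma_n^2/n\to2$, this gives $\frac{\sigma_n^2}{n}\int_0^\infty 2y\,\Phi(-y)\,dy\to 2\cdot\tfrac12=1$, which together with the two error estimates proves the limit. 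For the quantitative refinement, the cutoff removal contributes $\mathcal O(e^{-Ca})$, the sum-to-integral step $\mathcal O(1/\sqrt n)\subseteq\mathcal O(a/\sqrt n)$ (as $a=a_n\to\infty$), and the only remaining $n$-dependence is the prefactor $\sigma_n^2/n$, whose gap to $2$ is of lower order for the slowly-growing sequences $a_n$ chosen later (and is controlled via \thref{nu_calculation_lemma}); assembling these yields $1+\mathcal O(a/\sqrt n+e^{-Ca})$.

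The hard part will be the sum-to-integral step: keeping the Riemann-sum bound uniform in $n$, since both the integrand $2x\,\Phi(-x/\sigma_n)$ and the truncation point $\lfloor a\sqrt n\rfloor$ move with $n$. This is routine Euler--Maclaurin-type bookkeeping once one has fixed that $\sigma_n\asymp\sqrt n$; the care needed is mostly in the boundary terms and in making the total-variation estimate uniform in $n$. Everything else — the Gaussian tail bound controlling the cutoff, and the closed-form integral $\int_0^\infty 2y\,\Phi(-y)\,dy=\tfrac12$ — is immediate.
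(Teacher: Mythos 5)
Your proof is correct, and since the paper defers this lemma entirely to Claim 2.7 of \cite{KOS_Periodic_Cookies} (``the proof \ldots is identical \ldots and so we will not repeat it''), there is no in-paper argument to compare against; but the route you take — Gaussian tail bound to remove the cutoff, trapezoidal/Euler--Maclaurin comparison of sum to integral at cost $\mathcal{O}(\sigma_n)$, substitution to a unit Gaussian and the identity $\int_0^\infty 2y\,\Phi(-y)\,dy=\tfrac12$ — is the standard and essentially forced calculation, so it surely matches the cited proof. One clarification you should absorb, though: the $\sqrt{8A_n}$ appearing in the lemma's statement is a typo. Comparing with the final display in the proof of \thref{Head_sum_gaussian_approx}, the quantity actually carried forward is $\Phi\!\left(-t/\sqrt{2n}\right)$ (the $A_n$ has already been replaced by its limit $\tfrac14$, with the replacement cost absorbed into the $\bigl|A_n-\tfrac14\bigr|a^2$ error of that lemma). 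Hence $\sigma_n=\sqrt{2n}$ exactly and $\sigma_n^2/n=2$ exactly; the ``gap of $\sigma_n^2/n$ to $2$'' that you flag and propose to control via \thref{nu_calculation_lemma} simply does not arise. Indeed, had the argument really been $\sqrt{8nA_n}$ with the finite-$n$ average $A_n$, the stated error $\mathcal{O}(a/\sqrt n+e^{-Ca})$ would be false, since the prefactor discrepancy $8\bigl|A_n-\tfrac14\bigr|$ is controlled in the paper only as $o(1)$ (or as $\mathcal{O}(1/\log^2 n)$ under assumption \eqref{cookie_decay_assumption}), which can dominate $a/\sqrt n$. With $\sigma_n=\sqrt{2n}$ your three error sources are $\mathcal{O}(e^{-a^2/4})\subseteq\mathcal{O}(e^{-Ca})$ for the cutoff, $\mathcal{O}(1/\sqrt n)\subseteq\mathcal{O}(a/\sqrt n)$ for sum-to-integral, and zero from the prefactor, which is exactly the claimed bound.
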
 \noindent The proof is of \thref{Head_sum_approx_error} is identical to the proof of Claim 2.7 in \cite{KOS_Periodic_Cookies}, and so we will not repeat it. \\

\noindent \textit{Proof of \thref{Head_sum_gaussian_approx}.} Let $\Bar{q}_n = \frac{1}{n}\sum_{j=1}^{n}(1 - p_j )$, let $\sigma_j^2 = \mathbb{E}\left[\left((1 - \xi_j) - (1 - p_j)\right)^2\right] = p_j (1 - p_j)$, and let $\rho_j = \mathbb{E}\left[\left|\big((1 - \xi_j) - (1 - p_j)\big)^3\right|\right]$ for all $j \in \mathbb{N}$. By the Berry-Esseen theorem, there is a constant $\Cl[constant]{Berry_Esseen_constant}$ such that for all $\alpha \in \mathbb{R}$, \begin{align*}
    \left|P\left(\frac{F_n - n \Bar{q}_n }{\sqrt{n A_n}} \leq \alpha\right) - \Phi(\alpha)\right| \leq \Cr{Berry_Esseen_constant} \cdot \left(\sum_{i=1}^{n}\sigma_i^2\right)^{-3/2} \cdot \left(\sum_{i=1}^{n}\rho_i\right),
\end{align*}where $\Phi$ is the standard normal cdf. Since $\rho_i \leq 1$ for each $i$ and $A_n$ is bounded, we have that \begin{align*}
    \left(\sum_{i=1}^{n}\sigma_i^2\right)^{-3/2} \cdot \left(\sum_{i=1}^{n}\rho_i\right) \leq (n A_n )^{-3/2} \cdot n \leq C n^{-1/2}.
\end{align*}Therefore \begin{align}
    P\left(\frac{F_n - n \Bar{q}_n }{\sqrt{n A_n}} \leq \alpha\right) = \Phi(\alpha) + \mathcal{O}(n^{-1/2}). \label{Head_sum_gaussian_approx_key_line}
\end{align}We will now prove that we can replace $A_n$ and $\Bar{q}_n$ by their respective limits $\frac{1}{4}$ and $\frac{1}{2}$. The following lemma shows that we can do just that, and is analogous to Claim 2.9 in \cite{KOS_Periodic_Cookies}.
\begin{lemma}\thlabel{KOS_key_difference} \begin{align*}
    \left|P\left(\frac{F_n - n \Bar{q}_n}{\sqrt{n A_n}} \leq \alpha\right) - P\left(\frac{F_n - \frac{1}{2}n}{\sqrt{n(1/4)}} \leq \alpha\right)\right| \leq \Cl[constant]{KOS_key_difference_constant1}n^{-1/2} + \Cl[constant]{KOS_key_difference_constant2}\left|A_n - \frac{1}{4}\right|.
\end{align*}
\end{lemma}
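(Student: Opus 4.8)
The plan is to replace both probabilities by values of the standard normal cdf $\Phi$ using \eqref{Head_sum_gaussian_approx_key_line}, and then to estimate the resulting difference of $\Phi$-values, invoking the Gaussian tail bound at the end to make the estimate uniform in $\alpha$. First, the event in the second probability is $\{(F_n - \tfrac12 n)/\sqrt{n(1/4)} \le \alpha\} = \{F_n \le \tfrac n2 + \tfrac{\alpha}{2}\sqrt n\}$, which, written in terms of the ``correct'' standardized variable, equals $\{(F_n - n\Bar{q}_n)/\sqrt{nA_n} \le \beta_n\}$ where one computes
\[
\beta_n := \frac{\tfrac n2 + \tfrac{\alpha}{2}\sqrt n - n\Bar{q}_n}{\sqrt{nA_n}} = \frac{\alpha}{2\sqrt{A_n}} + \frac{\bigl(\tfrac12 - \Bar{q}_n\bigr)\sqrt n}{\sqrt{A_n}}.
\]
Since $\Bar{q}_n = 1 - \Bar{p}_n$ and $|\Bar{p}_n - \tfrac12| = \mathcal O(1/n)$ by \thref{nu_calculation_lemma}(a), while $A_n$ is bounded away from $0$ (each $A_n$ is positive by ellipticity and $A_n \to \tfrac14$ by \thref{nu_calculation_lemma}(b)), the second summand above is $\mathcal O(n^{-1/2})$; hence $\beta_n - \alpha = \alpha\bigl(\tfrac{1}{2\sqrt{A_n}} - 1\bigr) + \mathcal O(n^{-1/2})$, and, using $\bigl|\tfrac{1}{2\sqrt{A_n}} - 1\bigr| \le C\bigl|A_n - \tfrac14\bigr|$,
\[
|\beta_n - \alpha| \le C|\alpha|\,\bigl|A_n - \tfrac14\bigr| + C n^{-1/2}.
\]
Applying \eqref{Head_sum_gaussian_approx_key_line} (which, being a Berry--Esseen bound, is uniform in the threshold) to the events at thresholds $\alpha$ and $\beta_n$ shows that the left-hand side of \thref{KOS_key_difference} is at most $|\Phi(\alpha) - \Phi(\beta_n)| + \mathcal O(n^{-1/2})$.

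It then remains to bound $|\Phi(\alpha) - \Phi(\beta_n)|$ by a quantity of the form $C n^{-1/2} + C\bigl|A_n - \tfrac14\bigr|$ uniformly in $\alpha$, which is the bound asserted in \thref{KOS_key_difference}. For $n$ in any fixed finite range the left-hand side is at most $1 \le C n^{-1/2}$ once $C$ is large, so we may assume $n$ is large, and in particular $\bigl|A_n - \tfrac14\bigr|$ as small as we wish. If $|\alpha| \le 2$, then the displayed bound on $|\beta_n - \alpha|$ reads $|\beta_n - \alpha| \le C\bigl|A_n - \tfrac14\bigr| + Cn^{-1/2}$, and the $\tfrac{1}{\sqrt{2\pi}}$-Lipschitz continuity of $\Phi$ finishes this case. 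If $|\alpha| > 2$, then for $n$ large the bound on $|\beta_n - \alpha|$ forces $|\beta_n - \alpha| \le |\alpha|/2$, so the whole segment joining $\alpha$ and $\beta_n$ lies at distance at least $|\alpha|/2$ from the origin; hence, with $\phi = \Phi'$,
\[
|\Phi(\alpha) - \Phi(\beta_n)| \le |\alpha - \beta_n| \cdot \sup_{|s| \ge |\alpha|/2}\phi(s) \le \Bigl(C|\alpha|\,\bigl|A_n - \tfrac14\bigr| + Cn^{-1/2}\Bigr)\,\tfrac{1}{\sqrt{2\pi}}\,e^{-\alpha^2/8},
\]
and since $|\alpha|\,e^{-\alpha^2/8}$ and $e^{-\alpha^2/8}$ are bounded, the right-hand side is $\le C\bigl|A_n - \tfrac14\bigr| + Cn^{-1/2}$, completing the proof.

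I expect the only genuinely delicate point to be this last step: the mis-scaling shifts the standardized threshold by an amount proportional to $|\alpha|$, which naively would leave a factor $|\alpha|$ in the final bound, but it is absorbed because the mass of $F_n$ near a threshold at height $|\alpha|$ decays like $e^{-\alpha^2/2}$ and $|\alpha|\,e^{-\alpha^2/2}$ is bounded, so the net effect of the mis-scaling is of order $|A_n - \tfrac14|$ uniformly in $\alpha$. Everything else is routine: the mis-centering is $\mathcal O(1)$ before dividing by $\sqrt n$ and hence $\mathcal O(n^{-1/2})$ afterward (via \thref{nu_calculation_lemma}(a)), replacing the threshold $\tfrac n2 + \tfrac\alpha2\sqrt n$ by its integer part perturbs $\beta_n$ by only $\mathcal O(n^{-1/2})$, and the passage from the two probabilities to $\Phi(\alpha)$ and $\Phi(\beta_n)$ is precisely \eqref{Head_sum_gaussian_approx_key_line}. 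In short, the lemma records that replacing $n\Bar{q}_n$ by $\tfrac12 n$ costs $\mathcal O(n^{-1/2})$ and replacing $\sqrt{nA_n}$ by $\sqrt{n/4}$ costs exactly order $|A_n - \tfrac14|$.
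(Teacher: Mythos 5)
Your proof is correct and follows the same basic route as the paper: rewrite the second event in terms of the ``correctly'' standardized variable $(F_n - n\Bar{q}_n)/\sqrt{nA_n}$ at a shifted threshold $\beta_n$, pass to Gaussian cdf values via the Berry--Esseen bound \eqref{Head_sum_gaussian_approx_key_line}, and then estimate $|\Phi(\alpha)-\Phi(\beta_n)|$. Where you genuinely improve on the paper's exposition is the uniformity in $\alpha$. The paper applies the Lipschitz bound on $\Phi$ and lands on a term proportional to $|\alpha|\cdot|A_n-\tfrac14|$, then simply declares it to be $\mathcal O(|A_n-\tfrac14|+n^{-1/2})$ with a constant independent of $\alpha$, which is not justified as written (in the application, $\alpha$ scales like $t/\sqrt n$ and can be as large as $\mathcal O(a_n)$ with $a_n\to\infty$). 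You identify this as the delicate point and close it cleanly: split on $|\alpha|\le 2$ (Lipschitz suffices) and $|\alpha|>2$, where for $n$ large the shifted threshold $\beta_n$ stays within distance $|\alpha|/2$ of $\alpha$, so the Gaussian density on the intervening segment is at most $\phi(|\alpha|/2)$, and the boundedness of $|\alpha|e^{-\alpha^2/8}$ absorbs the stray factor $|\alpha|$. (In fact, even without your fix the final conclusion of \thref{Periodic_cookies_nu_calculation} survives — propagating an extra $|\alpha|\sim t/\sqrt n$ through the sum in \thref{Head_sum_gaussian_approx} only changes $a^2$ to $a^3$ there, which is still absorbed by the $\log^4$ in the end — but your version makes the lemma itself hold as literally stated, uniformly in $\alpha$.) The only superfluous remark is about taking integer parts: the two events are literally equal after rewriting the threshold, and $F_n$ being integer-valued causes no perturbation.
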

\noindent \textit{Proof of \thref{KOS_key_difference}.} The proof follows the same technique as the proof of Claim 2.9 in \cite{KOS_Periodic_Cookies}, but we must keep track of the error in terms of $b_n = |A_n - 1/4|$: \begin{align*}
    \left|P\left(\frac{F_n - n \Bar{q}_n}{\sqrt{n A_n}} \leq \alpha\right) - P\left(\frac{F_n - \frac{1}{2}n}{\sqrt{n(1/4)}} \leq \alpha\right)\right| &= \left|P\left(\frac{F_n - n \Bar{q}_n}{\sqrt{n A_n}} \leq \alpha\right) - P\left(\frac{F_n - n \Bar{q}_n}{\sqrt{n A_n}} \leq \sqrt{\frac{(1/4)}{A_n}}\alpha + \frac{n\left(\frac{1}{2} - \Bar{q}_n\right)}{\sqrt{n A_n}}\right)\right|\\
    &\leq \left|\Phi(\alpha) - \Phi\left(\sqrt{\frac{(1/4)}{A_n}}\alpha + \frac{n\left(\frac{1}{2} - \Bar{q}_n\right)}{\sqrt{n A_n}}\right)\right| + Cn^{-1/2},
\end{align*}where this last inequality follows from \eqref{Head_sum_gaussian_approx_key_line}. Using the fact that $\Phi$ is $\frac{1}{\sqrt{2\pi}}-$Lipschitz, we can bound this expression by \begin{align*}
    \frac{1}{\sqrt{2 \pi}}\left|\alpha - \left(\sqrt{\frac{(1/4)}{A_n}}\alpha + \frac{n\left(\frac{1}{2} - \Bar{q}_n\right)}{\sqrt{n A_n}}\right)\right| + Cn^{-1/2} &\leq \frac{1}{\sqrt{2 \pi}}\left(\alpha\left|1 - \sqrt{\frac{(1/4)}{A_n}}\right|+\frac{n\left|\frac{1}{2} - \Bar{q}_n\right|}{\sqrt{n A_n}} \right) + Cn^{-1/2}. \\
\end{align*}By \thref{nu_calculation_lemma}, $\Bar{q}_n = \frac{1}{2} + \mathcal{O}(1/n)$, and so we can bound $n\left(\frac{1}{2} - \Bar{q}_n\right)$ by a constant to obtain
\begin{align*}
    \frac{1}{\sqrt{2 \pi}}\left(\alpha\left|1 - \sqrt{\frac{(1/4)}{A_n}}\right|+\frac{n\left|\frac{1}{2} - \Bar{q}_n\right|}{\sqrt{n A_n}} \right) + Cn^{-1/2} &\leq \frac{1}{\sqrt{2 \pi}}\left(\alpha\left|1 - \sqrt{\frac{(1/4)}{A_n}}\right| + \frac{\Cr{nu_calculation_constant}}{\sqrt{n A_n}}\right) + Cn^{-1/2} \\
    &\stackrel{*}{=} \frac{1}{\sqrt{2\pi}}\left(\alpha \frac{\left|A_n - \frac{1}{4}\right|}{|\sqrt{A_n}\left(\sqrt{A_n} + \frac{1}{4}\right)|} + \frac{\Cr{nu_calculation_constant}}{\sqrt{n A_n}}\right) + Cn^{-1/2},
\end{align*}where the starred equality follows from multiplying the numerator and denominator of the first term by $\sqrt{A_n} + \sqrt{\frac{1}{4}}$. Recall that $A_n \to \frac{1}{4}$, and moreover $A_n = \frac{1}{n}\sum_{j=1}^n p_j (1-p_j ) \geq \left(\min_{j} p_j (1-p_j )\right) > 0$ because we assume that $p_j \to \frac{1}{2}$ and that $p_j \notin \{0,1\}$ for all $j \in \mathbb{N}$. Therefore the term above is $\mathcal{O}\left(|A_n - \frac{1}{4}| + n^{-1/2}\right)$, and so we have established that
\begin{align*}
\frac{1}{\sqrt{2 \pi}}\left|\alpha - \left(\sqrt{\frac{A}{A_n}}\alpha + \frac{n\left(\frac{1}{2} - \Bar{q}_n\right)}{\sqrt{n A_n}}\right)\right| + Cn^{-1/2} &\leq \Cr{KOS_key_difference_constant1} n^{-1/2} + \Cr{KOS_key_difference_constant2}\left|A_n - \frac{1}{4}\right|,
\end{align*}which completes the proof of \thref{KOS_key_difference}. \hfill $\blacksquare$ \\

We now have the tools that we need to finish the proof of \thref{Head_sum_gaussian_approx}. The remaining proof follows the same structure as in \cite{KOS_Periodic_Cookies}, although we must keep track of additional error terms. By \thref{KOS_key_difference}, we have that \begin{align*}
    H_n (a) &= \sum_{t=0}^{\lfloor a \sqrt{n} \rfloor} t \cdot \left[ P\left(\frac{F_{2n + t} - \frac{2n + t}{2}}{\sqrt{(2n+t)(1/4)}} < \frac{-t}{2 \sqrt{(2n+t)(1/4)}}\right) + P\left(\frac{F_{2n-t-1} - \frac{2n-t-1}{2}}{\sqrt{(2n-t-1)(1/4)}} \geq \frac{t+1}{2\sqrt{2n-t-1 (1/4)}}\right)\right] \\
    &= \sum_{t=0}^{\lfloor a \sqrt{n} \rfloor} t \left[\Phi\left(\frac{-t}{2\sqrt{(2n+t)(1/4)}}\right) + 1 - \Phi\left(\frac{t+1}{2\sqrt{(2n-t-1)(1/4)}}\right) + \mathcal{O}\left(n^{-1/2} + \left|A_n - \frac{1}{4}\right|\right)\right].
\end{align*} Again using the fact that $\Phi$ is $\frac{1}{\sqrt{2\pi}}-$Lipschitz yields \begin{align*}
    \left|\Phi\left(\frac{-t}{2\sqrt{(2n+t)(1/4)}}\right) - \Phi\left(\frac{-t}{\sqrt{8 n (1/4)}}\right)\right| &\leq \frac{1}{\sqrt{2 \pi}}\left|\frac{-t}{\sqrt{(2n+t)}} - \frac{-t}{\sqrt{2n}}\right| \\
    &= \frac{1}{\sqrt{2 \pi}}\left|\frac{t^2}{\sqrt{2n(2n+t)}(\sqrt{2n + t} + \sqrt{2n})}\right| \\
    &\leq \frac{Ct^2}{n^{3/2}}.
\end{align*}The other term can be bounded in a similar way. Therefore, \begin{align*}
    \left|H_n (a) - \sum_{t=0}^{\lfloor a \sqrt{n}\rfloor} t \left[\Phi\left(\frac{-t}{\sqrt{2 n}}\right) + 1 - \Phi\left(\frac{t}{\sqrt{2n}}\right)\right]\right| &\leq C\sum_{t = 0}^{\lfloor a \sqrt{n}\rfloor } \left(\frac{t^3}{n^{3/2}} + tn^{-1/2} + \left|A_n - \frac{1}{4}\right|t\right) \\
    &\leq C\left(a^4 n^{1/2} + a^2 n^{1/2} + \left|A_n -\frac{1}{4}\right|a^2 n\right).
\end{align*}Therefore, for some constant $\Cr{Head_sum_gaussian_approx_constant}$ independent of $n$, we have \begin{align*}
    \frac{1}{n}\left|H_n (a) - \sum_{t=0}^{\lfloor a \sqrt{n}\rfloor} t \left[2\Phi\left(\frac{-t}{\sqrt{2 n}}\right)\right]\right| &\leq \Cr{Head_sum_gaussian_approx_constant} \left(a^4 n^{-1/2} + \left|A_n - \frac{1}{4}\right|a^2\right),
\end{align*}thereby proving \thref{Head_sum_gaussian_approx}. \hfill $\blacksquare$ \\

We can now complete our proof by combining the error estimates for $H_n (a)$ and $T_n (a)$ given above. We see that \begin{align*}
    \left|\frac{1}{n}\sum_{t=0}^{\infty}t \cdot P_n \left(|Z_1^{+}- n| > t\right) - 1\right|&= \left|\frac{1}{n}\left(H_n(a)+T_n (a)\right) - 1\right|\\
    &\leq  \left|\frac{1}{n}H_n (a) - 1\right| + \frac{1}{n}\left|T_n (a)\right| \\
    &= \mathcal{O}\left(a^4 n^{-1/2} + \left|A_n - \frac{1}{4}\right|a^2 + e^{-C' a}\right).
\end{align*} To finish our proof that $\lim_{n \to \infty}\mathbb{E}_n [(Z_1^{+}- n)^2] = 2$, we only need to choose an appropriate sequence $a_n$. In the case that $b_n \leq n^{-1/2}$, we can take $a(n) = C \log n$ to obtain a rate of convergence of $\mathcal{O}\left(\frac{\log^4 n}{\sqrt{n}}\right)$, because we have that \begin{align*}
    b_n \log^4 b_n \leq \frac{(\log n^{-1/2})^4}{\sqrt{n}} = \mathcal{O}\left(\frac{\log^4 n}{\sqrt{n}}\right).
\end{align*}On the other hand, if $b_n \geq n^{-1/2}$, we will take $a(n) = -\frac{1}{C'}\log\left(b_n\right)$ to obtain a rate of convergence of $\mathcal{O}\left(b_n \log^4 b_n\right)$. Finally, note that in this case $b_n \log^4 b_n$ is the dominant term, since \begin{align*}
    \frac{\log^4 n}{\sqrt{n}} = \frac{16 (\log n^{-1/2})^4}{\sqrt{n}} \leq 16 b_n (\log b_n)^4 = \mathcal{O}\left(b_n \log^4 b_n\right).
\end{align*}Combining the two cases, we see that $|\nu(n) - 2| = \mathcal{O}\left(n^{-1/2}\log^4 n + b_n \log^4 b_n\right)$. \end{proof}

\subsubsection{Rate of convergence for $\theta(n)$}
Since $\theta(n) = \frac{2\rho(n)}{\nu(n)}$, in light of the preceeding sections we have that \begin{align*}
    \theta =\lim_{n \to \infty} \theta(n) =  \lim_{n \to \infty} \frac{2 \rho(n)}{\nu(n)} = \delta.
\end{align*}More importantly, we can use the bounds we obtained on $|\rho(n) - \delta|$ and $|\nu(n) - 2|$ to bound the rate at which $\theta(n) \to \delta$. In particular, we will show that $|\theta(n) - \delta| = o\left(\frac{1}{\log n}\right)$. To do this, note that \begin{align*}
    \left|\theta(n) - \delta\right| &= \left|\frac{2\rho(n)}{\nu(n)} - \delta\right| \\ 
    &=\left|\frac{2\rho(n) - \rho(n)\nu(n) + \rho(n)\nu(n) - \nu(n)\delta}{\nu(n)}\right| \\
    &\leq \frac{\rho(n)}{\nu(n)}\left|\nu(n) - 2\right| + \left|\rho(n) - \delta\right|.
\end{align*}Under the assumption in \eqref{cookie_decay_assumption}, we have that 
\begin{align*}
    \left|\rho(n) - \delta\right| &= o\left(\frac{1}{\log n}\right).
\end{align*}
The term involving $|\nu(n) - 2|$ can be handled by using the bound from \thref{Periodic_cookies_nu_calculation}:\begin{align}
    |\nu(n) - 2| = \mathcal{O}\left(\frac{\log^4 n}{\sqrt{n}} + b_n \log^4 b_n\right). \label{bound_from_lemma}
\end{align}In the case that $b_n \leq n^{-1/2}$, the dominant term in \eqref{bound_from_lemma} will be $n^{-1/2}\log^4 n$, and then we will have that $|\nu(n) - 2| = \mathcal{O}\left(\frac{\log^4 n}{\sqrt{n}}\right) = o\left(\frac{1}{\log n}\right)$. On the other hand, if $b_n \geq n^{-1/2}$, the dominant term in \eqref{bound_from_lemma} will be $b_n \log^4 b_n$. We will now show that this bound is also $o\left(\frac{1}{\log n}\right)$. Since we assume that $\sum_{j=n}^{\infty} (2p_j - 1) = o\left(\frac{1}{\log n}\right)$, for all $\epsilon > 0$ there exists $n_{\epsilon}$ so that \begin{align*}
    \left|\sum_{j=n}^{\infty}(2p_j - 1)\right| &< \frac{\epsilon}{\log n}, \quad \forall\  n \geq n_{\epsilon}.
\end{align*}In turn, this implies that $|2p_n - 1| = o\left(\frac{1}{\log n}\right)$, since for any $\epsilon > 0$ \begin{align*}
    \left| 2p_n - 1 \right| &= \left|\sum_{j=n}^{\infty}(2p_j - 1) - \sum_{j=n-1}^{\infty}(2p_j - 1)\right| < \frac{2\epsilon}{\log n} \quad \forall \ n \geq n_{\epsilon},
\end{align*}and so $|2p_n - 1| = \mathcal{O}\left(\frac{1}{\log n}\right)$. Since $b_n = \frac{1}{4n}\sum_{j=1}^n (2p_j - 1)^2$, this implies that \begin{align*}
    b_n = \mathcal{O}\left(\frac{1}{\log^2 n}\right), 
\end{align*}which gives the following bound on $|\nu(n) - 2|$: \begin{align*}
    \left|\nu(n) - 2 \right| &= \mathcal{O}\left(\frac{\left(\log(\log n)\right)^4}{(\log n)^2}\right) = o\left(\frac{1}{\log n}\right).
\end{align*}Combining the bounds on $|\rho(n) - \delta|$ and $|\nu(n) - 2|$ shows that $|\theta(n) - \delta| = o\left(\frac{1}{\log n}\right)$.  
\section{Proof of \thref{Transient_example}: example of a transient ERW with $\delta = 1$}
In this section we prove \thref{Transient_example} by exhibiting a cookie environment where $\delta = 1$, but ERW is transient. We will build an environment where the first few cookies give a total drift greater than $1$, then fill the rest of the cookie stack with infinitely many weak negative cookies that are spaced very far apart (and the gaps filled with placebo cookies). Our guiding intuition in this construction is that we want the total drift $\delta$ contained in the cookie stack to be $1$, but only ``in the limit.'' For any finite number of visits to a given site, the total drift that the walker consumes at that site will be greater than $1$, and so we might expect that the walk behaves as though $\delta>1$, i.e. is transient to $+\infty$. To be explicit, consider the cookie environment $\boldsymbol{p} = \left(p_1 , p_2 , p_3 , \ldots\right)$, where \begin{align*}
    p_k &= \begin{cases} \frac{5}{6} & k = 1, 2 , 3 \\ \frac{1}{2} - \left(\frac{1}{2}\right)^{m+1} & k = 4^{4^m}, \ m = 1, 2, \ldots \\ \frac{1}{2} & \text{otherwise}\end{cases}.
\end{align*}Recall that $\nu = 2$, $\rho = \delta(\boldsymbol{p})$, and $\theta =\delta(\boldsymbol{p})$. In this case, \begin{align*}
    \delta &= \sum_{j=1}^{\infty} (2p_j - 1) = 2 + \sum_{m=1}^{\infty}-\left(\frac{1}{2}\right)^m = 1,
\end{align*} and so $\rho = \theta = 1$. Note that $\delta_m > 1$ when $m > 3$. To ensure that ERW in this environment is transient, we will check that the second part of \thref{Theorem_1.3_KOS_Periodic_Cookies} is satisfied. That is, we will need to verify that \begin{align*}
\theta (n) - 1 \geq \frac{2}{\log n}
\end{align*}for all $n$ sufficiently large. To that end, first observe that \begin{align*}
    \theta (n) - 1 = \frac{\rho(n)}{\nu (n)}\left(2-\nu(n)\right) + (\rho(n) - 1),
\end{align*}and so we will need suitable lower bounds on the right-hand side. The term involving $(2-\nu(n))$ will ultimately be negligible since there exists a constant $C$ such that \begin{align*}
    \left| A_n - \frac{1}{4}\right| = \frac{1}{4n}\sum_{k=1}^{n}(2p_k - 1)^2 \leq \frac{C}{n}
\end{align*}because the sum $\sum (2p_k - 1)^2$ converges. \thref{Periodic_cookies_nu_calculation} therefore implies that $|\nu(n) - 2| = \mathcal{O}\left(\frac{\log^4 n}{\sqrt{n}}\right)$. We now turn our attention to the term $(\rho(n) - 1)$. By \eqref{compute_rho}, $\rho(n) = \mathbb{E}[\delta_{T_n}]$, and so (assuming that $n > 3$) we have \begin{align}
    \rho(n) - 1 &= \mathbb{E}[\delta_{T_n}] - 1 \nonumber \\
    &\stackrel{n > 3}{=} \mathbb{E}\left[2 + \sum_{k=4}^{T_n}(2p_k - 1)\right] - 1 \nonumber \\
    &=1 +  \mathbb{E}\left[\sum_{k=4}^{T_n}(2p_k - 1)\right]. \label{example_rho_line}
\end{align}We will now find a formula for the expectation in \eqref{example_rho_line}. To this end, let $C_{\boldsymbol{p}}(x) = \#\{j \leq x \ : \ p_j < 1/2\}$ be the number of negative drift cookies in $\boldsymbol{p}$ up to cookie $x$. This allows us to rewrite the expression in \eqref{example_rho_line}: \begin{align*}
    1 + \mathbb{E}\left[\sum_{k=4}^{T_n}(2p_k - 1)\right] &= 1 + \mathbb{E}\left[\sum_{m=1}^{C_{\boldsymbol{p}}(T_n)}-\left(\frac{1}{2}\right)^m\right] \\
    &= \mathbb{E}\left[\left(\frac{1}{2}\right)^{C_{\boldsymbol{p}}(T_n)}\right].
\end{align*}Now, we will obtain a lower bound on this quantity. \begin{align*}
    \mathbb{E}\left[\left(\frac{1}{2}\right)^{C_{\boldsymbol{p}}(T_n)}\right] &\geq \left(\frac{1}{2}\right)^{\log_4 (\log n)} P\left(C_{\boldsymbol{p}}(T_n) \leq \log_{4}(\log n)\right) \\
    &= \frac{1}{\sqrt{\log n}} P\left(C_{\boldsymbol{p}}(T_n) \leq \log_{4}(\log n)\right).
\end{align*}Our choice of environment guarantees that the number of negative drift cookies in the first, say, $3n$ cookies is no more than $\log_{4}(\log n)$. This fact together with the concentration bound for $T_n$ in \eqref{T_n_concentration} gives
\begin{align*}
    \frac{1}{\sqrt{\log n}}P\left(C_{\boldsymbol{p}}(T_n) \leq \log_4 (\log n)\right) &\geq \frac{1}{\sqrt{\log n}}P\left(T_n \leq 3n \right) \\
    &\geq \frac{1 - e^{-C n}}{\sqrt{\log n}}.
\end{align*}Combining this inequality with the bound for $\nu(n) - 2$ discussed above, we have for all sufficiently large $n$ that \begin{align*}
    \theta(n) - 1 &\geq \frac{1 - e^{-C n}}{\sqrt{\log n}} + \mathcal{O}\left(\frac{\log^4 n}{\sqrt{n}}\right) \\
    &\geq \frac{2}{\log n}.
\end{align*}Therefore, the FBLP associated to the ERW in $\boldsymbol{p}$ has a positive probability of survival by \thref{Theorem_1.3_KOS_Periodic_Cookies}, and so the ERW in $\boldsymbol{p}$ is a.s. transient to $+\infty$. \hfill $\blacksquare$

\section{Appendix}

\begin{lemma}\label{Walds_Identity}\thlabel{Wald} Let $Z_1^{+}$ denote the number of offspring in the first generation of the forward branching-like process, and let $T_n$ be the trial on which the $n$th failure in the coin tosses that determine the value of $Z_1^{+}$ occurs (where the $i$th coin comes up heads with probability $p_i$). Then \begin{align*}
    \mathbb{E}_n [Z_1^{+}] = \mathbb{E}\left[\sum_{j=1}^{T_n}\xi_j\right] = \mathbb{E}\left[\sum_{j=1}^{T_n} p_j \right].
\end{align*}
\end{lemma}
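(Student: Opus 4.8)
The plan is to recognize \thref{Wald} as a form of Wald's identity for a sum of \emph{independent but not identically distributed} Bernoulli variables evaluated at the random time $T_n$. The first equality is immediate from the construction of the forward branching-like process: conditional on $\{Z_0^+ = n\}$ one has $Z_1^+ = \sum_{k=1}^{T_n}\xi_k$ by definition, so $\mathbb{E}_n[Z_1^+] = \mathbb{E}\big[\sum_{j=1}^{T_n}\xi_j\big]$. All the content is in the second equality, namely that each $\xi_j$ may be replaced by its mean $p_j$ inside a sum whose length $T_n$ is itself random. As a preliminary I would record that $T_n<\infty$ almost surely for every $n$: the finite-drift assumption forces $p_i\to 1/2$, hence $1-p_i\to 1/2$ and $\sum_i(1-p_i)=\infty$, so by the second Borel--Cantelli lemma (the $\xi_i$ being independent) infinitely many failures occur almost surely.

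The heart of the argument is an exchange of sum and expectation made legitimate by a stopping-time structure. Let $\mathcal{F}_k=\sigma(\xi_1,\dots,\xi_k)$. Then $T_n$ is an $(\mathcal{F}_k)$-stopping time, since $\{T_n=k\}=\{\sum_{i=1}^k(1-\xi_i)=n\}\cap\{\sum_{i=1}^{k-1}(1-\xi_i)=n-1\}\in\mathcal{F}_k$; consequently $\{j\le T_n\}=\{T_n\le j-1\}^c\in\mathcal{F}_{j-1}$, so the indicator $\boldsymbol{1}_{\{j\le T_n\}}$ is independent of $\xi_j$. Writing the random sum as the series $\sum_{j=1}^{T_n}\xi_j=\sum_{j\ge1}\xi_j\,\boldsymbol{1}_{\{j\le T_n\}}$, all terms are nonnegative, so Tonelli's theorem applies and
\[
\mathbb{E}\Big[\sum_{j=1}^{T_n}\xi_j\Big]=\sum_{j\ge1}\mathbb{E}\big[\xi_j\,\boldsymbol{1}_{\{j\le T_n\}}\big]=\sum_{j\ge1}p_j\,P(T_n\ge j)=\sum_{j\ge1}\mathbb{E}\big[p_j\,\boldsymbol{1}_{\{j\le T_n\}}\big]=\mathbb{E}\Big[\sum_{j=1}^{T_n}p_j\Big],
\]
where the second equality uses the independence just established and the last step is Tonelli again.

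To finish I would note that both sides are finite: since $1-p_i$ is bounded below (by ellipticity for the finitely many indices where it is small, and by $1-p_i\ge 1/4$ for all large $i$ since $p_i\to 1/2$), the variable $T_n$ is stochastically dominated by a negative-binomial-type quantity and has finite mean, and $\sum_{j=1}^{T_n}p_j\le T_n$. The only genuinely delicate point is the independence of $\boldsymbol{1}_{\{j\le T_n\}}$ from $\xi_j$, which is precisely where the stopping-time property enters; everything else is Tonelli, so I do not anticipate any real obstacle.
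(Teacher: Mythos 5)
Your proof is correct and follows essentially the same route as the paper's: decompose $\sum_{j=1}^{T_n}\xi_j$ as $\sum_{j\ge1}\xi_j\mathbf{1}_{\{j\le T_n\}}$, observe that $\{j\le T_n\}=\{T_n>j-1\}$ is determined by $\xi_1,\dots,\xi_{j-1}$ and hence independent of $\xi_j$, and conclude by Tonelli. If anything you are a touch more careful than the paper's write-up, which states that $\{T_n>j-1\}$ ``depends only on $\xi_1,\dots,\xi_j$'' (an apparent off-by-one slip---it must be $\mathcal{F}_{j-1}$-measurable for the independence to apply), and which does not pause to justify the interchange of sum and expectation or the almost-sure finiteness of $T_n$; your treatment of these points is sound.
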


\begin{proof}
The statement is essentially a version of Wald's identity, and is proved in a similar way. First, we write \begin{align}
    \mathbb{E}\left[\sum_{j=1}^{T_n} \xi_k\right] = \sum_{j=1}^{\infty}\mathbb{E}\left[\xi_j \boldsymbol{1}_{\{T_n \geq j\}}\right]. \label{Wald_proof_line}
\end{align}Because the event $\{T_n \geq j\} = \{T_n > j-1\}$ depends only on $\xi_1 , \ldots, \xi_j$, we have $\mathbb{E}\left[\xi_j \boldsymbol{1}_{\{T_n \geq j\}}\right] = p_j P(T_n \geq j)$. Using this fact, we can rewrite the right-hand side of \eqref{Wald_proof_line} as \begin{align*}
    \sum_{j=1}^{\infty} p_j P(T_n \geq j) = \sum_{j=1}^{\infty}p_j \mathbb{E}[\boldsymbol{1}_{\{T_n \geq j\}}] = \mathbb{E}\left[\sum_{j=1}^{\infty}p_j \boldsymbol{1}_{\{T_n \geq j}\}\right] = \mathbb{E}\left[\sum_{j=1}^{T_n}p_j\right],
    \end{align*}thereby proving the claim.
\end{proof}

\clearpage
\bibliography{references}{}
\bibliographystyle{amsplain}

\end{document}